\def\De{\Delta}
\def\g{\gamma}
\def\l{\lambda}
\def\ve{\varepsilon}
\def\qed{\hfill$\Box$\vspace{12pt}}
\long\def\delete#1{}
\newcommand{\bmat}[1]{\begin{bmatrix}#1\end{bmatrix}}
\newcommand{\be}{\begin{equation}}
\newcommand{\ee}{\end{equation}}
\newcommand{\ben}{\begin{equation*}}
\newcommand{\een}{\end{equation*}}
\newcommand{\bea}{\begin{eqnarray}}
\newcommand{\eea}{\end{eqnarray}}
\newcommand{\bean}{\begin{eqnarray*}}
\newcommand{\eean}{\end{eqnarray*}}
\def\Cay{{\rm Cay}}
\def\bw{{\rm bw}}
\def\SL{{\rm SL}}
\def\PSL{{\rm PSL}}
\def\PGL{{\rm PGL}}
\def\ZZZ{\mathbb{Z}}
\newtheorem{thm}{Theorem}[section]
\newtheorem{cor}[thm]{Corollary}
\newtheorem{con}[thm]{Construction}
\newtheorem{exam}[thm]{Example}
\newtheorem{lem}[thm]{Lemma}
\newtheorem{rem}{Remark}
\numberwithin{equation}{section}
\title{Spectra of the neighbourhood corona of two graphs}
\author{Xiaogang Liu\; and\; Sanming Zhou
\\
{\small Department of Mathematics and Statistics}\\
{\small The University of Melbourne}\\
{\small Parkville, VIC 3010, Australia}\\
\emph{{\small xiaogliu@student.unimelb.edu.au, smzhou@ms.unimelb.edu.au}} }
\date{}
\begin{document}

\openup 0.5\jot
\maketitle
%\linenumbers
\begin{abstract}
Given simple graphs $G_1$ and $G_2$, the neighbourhood corona of $G_1$ and $G_2$, denoted  $G_1\star G_2$, is the graph obtained by taking one copy of $G_1$ and $|V(G_1)|$ copies of $G_2$, and joining the neighbours of the $i$th vertex of $G_1$ to every vertex in the $i$th copy of $G_2$. In this paper we determine the adjacency spectrum of $G_1 \star G_2$ for arbitrary $G_1$ and $G_2$, and the Laplacian spectrum and signless Laplacian spectrum of $G_1\star G_2$ for regular $G_1$ and arbitrary $G_2$, in terms of the corresponding spectrum of $G_1$ and $G_2$. The results on the adjacency and signless Laplacian spectra enable us to construct new pairs of adjacency cospectral and signless Laplacian cospectral graphs. As applications of the results on the Laplacian spectra, we give constructions of new families of expander graphs from known ones by using neighbourhood coronae.

\bigskip

\noindent\textbf{Keywords:} Spectrum, Cospectral graphs, Neighbourhood corona, Expander graphs

\bigskip

\noindent{{\bf AMS Subject Classification (2010):} 05C50}
\end{abstract}
%\linenumbers

\section{Introduction}

All graphs considered in this paper are undirected and simple. Let $G=(V(G),E(G))$ be a
graph with vertex set $V(G)=\{v_1,v_2,\ldots,v_n\}$ and edge set
$E(G)$. The \emph{adjacency matrix} of $G$, denoted by $A(G)$, is the $n \times n$ matrix whose $(i,j)$-entry is $1$ if $v_i$ and $v_j$ are adjacent in $G$ and $0$ otherwise. Denote by $d_i=d_G(v_i)$ the degree of $v_i$ in $G$, and define $D(G)$ to be the diagonal matrix with diagonal entries $d_1,d_2,\ldots,d_n$. The \emph{Laplacian matrix} of $G$ and the \emph{signless Laplacian matrix} of $G$ are defined as $L(G)=D(G)-A(G)$ and $Q(G)=D(G)+A(G)$, respectively. Given an $n \times n$ matrix $M$, denote by
$$
\phi(M;x)=\det(xI-M),
$$
or simply $\phi(M)$, the characteristic polynomial of $M$, where $I$ is the identity matrix with the same size as $M$. In particular, for a graph $G$, we call $\phi(A(G))$  (respectively, $\phi(L(G))$, $\phi(Q(G))$) the \emph{adjacency} (respectively, \emph{Laplacian}, \emph{signless Laplacian}) \emph{characteristic polynomial} of $G$, and its roots the \emph{adjacency} (respectively, \emph{Laplacian}, \emph{signless Laplacian}) \emph{eigenvalues} of $G$. Denote the eigenvalues of $A(G), L(G)$ and $Q(G)$, respectively, by
\bean
&&\lambda_1(G)\geq\lambda_2(G)\geq\cdots\geq\lambda_n(G),\\
&&\mu_1(G)\leq\mu_2(G)\leq\cdots\leq\mu_n(G),\\
&&\nu_1(G)\leq\nu_2(G)\leq\cdots\leq\nu_n(G).
\eean
Note that $\mu_1(G) = 0$. The collection of eigenvalues of $A(G)$ (respectively, $L(G)$, $Q(G)$) together with their multiplicities are called the \emph{$A$-spectrum} (respectively, \emph{$L$-spectrum}, \emph{$Q$-spectrum}) of $G$. Two graphs are said to be \emph{$A$-cospectral} (respectively, \emph{$L$-cospectral}, \emph{$Q$-cospectral}) if they have the same $A$-spectrum (respectively, $L$-spectrum, $Q$-spectrum). It is well known that graph spectra store a lot of structural information about a graph; see \cite{kn:Cvetkovic95,kn:Cvetkovic10,kn:Brouwer12} and the references therein.

The \emph{corona} of two graphs was first introduced by Frucht and Harary in \cite{kn:Frucht70} with the goal of constructing a graph whose automorphism group is the wreath product of the two component automorphism groups. Since then a number of papers on graph-theoretic properties of coronae have appeared. As far as eigenvalues are concerned, the $A$-spectra, $L$-spectra and $Q$-spectra of the corona of any two graphs can be expressed by that of the two factor graphs  \cite{kn:Barik07,kn:Cui12,kn:McLeman11,kn:Wang12}. Similarly, the $A$-spectrum, $L$-spectrum and $Q$-spectrum of the edge corona \cite{kn:Hou10} of two graphs, which is a variant of the corona operation, were completely computed in \cite{kn:Cui12,kn:Hou10,kn:Wang12}.

Another variant of the corona operation was introduced in \cite{kn:Gopalapillai11} recently. Given two graphs $G_1$ and $G_2$ on $n_1$ and $n_2$ vertices, respectively, the \emph{neighbourhood corona} of $G_1$ and $G_2$, denoted by $G_1\star G_2$, is the graph obtained by taking one copy of $G_1$ and $n_1$ copies of $G_2$, all vertex-disjoint, and joining every neighbour of the $i$th vertex of $G_1$ to every vertex in the $i$th copy of $G_2$ by a new edge (see Figure \ref{figNB} for an example, where $P_n$ denotes the path with $n$ vertices). In \cite[Theorem 2.1]{kn:Gopalapillai11}, the $A$-spectrum of $G_1\star G_2$ for an arbitrary graph $G_1$ and a regular graph $G_2$, was given in terms of the $A$-spectra of $G_1$ and $G_2$. In the same paper, the author also gave \cite[Theorem 3.1]{kn:Gopalapillai11} the $L$-spectrum of $G_1\star G_2$ in terms of that of $G_1$ and $G_2$, for a regular $G_1$ and an arbitrary $G_2$.

\begin{figure}[here]
\centering
\vspace{-0.9cm}
\includegraphics*[height=7.4cm]{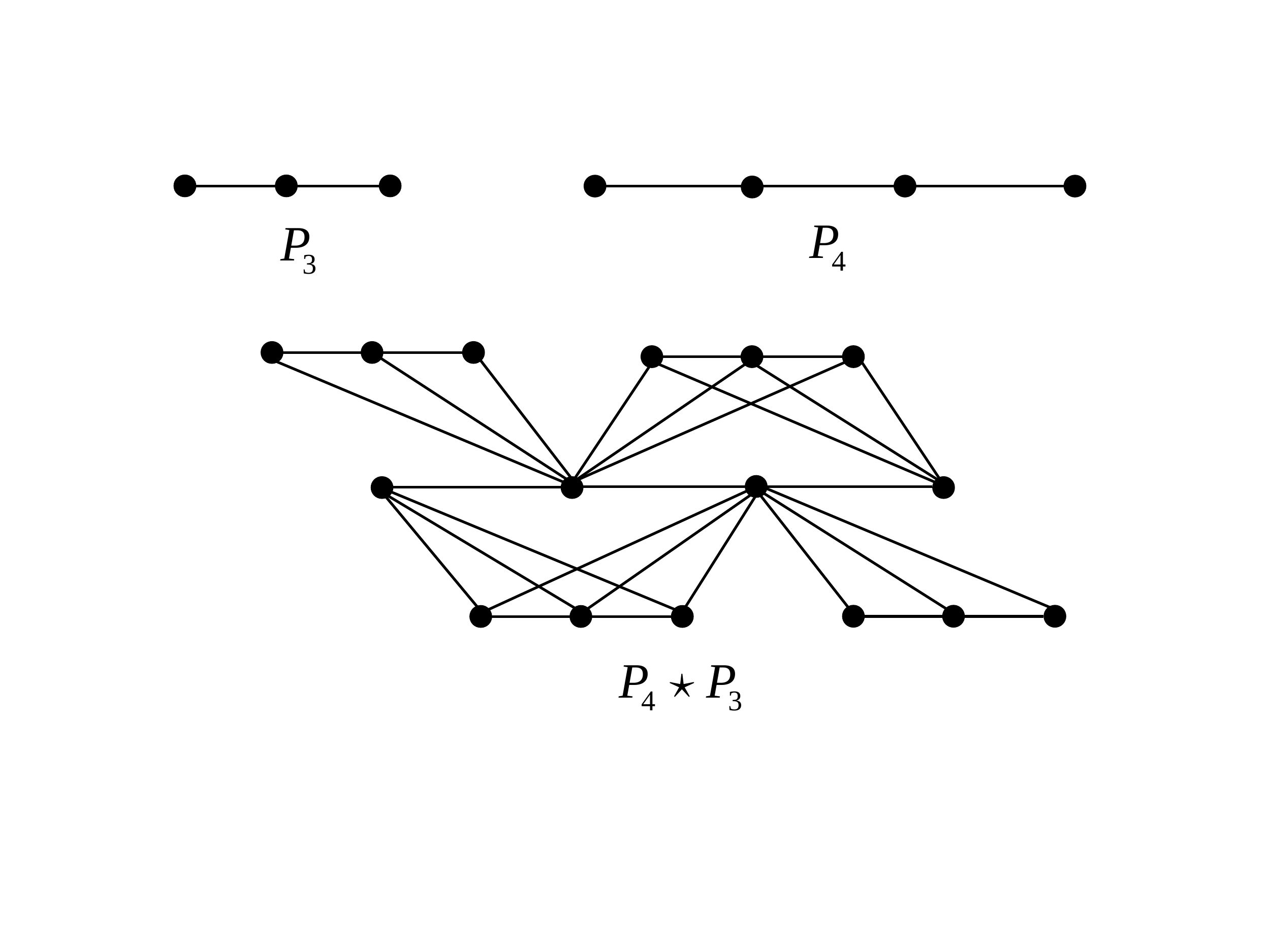}
\vspace{-2.0cm}
\caption{\small The neighbourhood corona of $P_4\star P_3$.}
\label{figNB}
\end{figure}

Motivated by the work above, in the first part of this paper (section \ref{S:spec}), we will first determine the $A$-spectrum of $G_1\star G_2$ for {\em arbitrary} graphs $G_1$ and $G_2$ in terms of that of $G_1$ and $G_2$ (see Theorem \ref{CoronaTh1}); this generalises \cite[Theorem 2.1]{kn:Gopalapillai11}. Second, we will determine the $Q$-spectrum of $G_1\star G_2$ for a regular graph $G_1$ and an arbitrary graph $G_2$ (see Theorem \ref{CoronaTh2}). Thirdly, we notice that the proof of \cite[Theorem 3.1]{kn:Gopalapillai11} about the $L$-spectrum of $G_1 \star G_2$ is incorrect, although the result itself is correct. We will give another proof of this result by using a different approach (see Theorem \ref{CoronaTh3} and Corollary \ref{LCorRegular}). As we will see in Corollaries \ref{cor:acosp} and \ref{cor:qcosp}, our results on $A$-spectra and $Q$-spectra enable us to construct many pairs of $A$-cospectral and $Q$-cospectral graphs, respectively.

Expanders are sparse but highly connected graphs (see section \ref{sec:exp} for a rigorous definition). It is well known \cite{HLW} that expander graphs have many applications in a diversity of disciplines, including computer science, coding theory, cryptography, communication networks, complexity theory, derandomization, Markov chains, statistical mechanics, etc. In most applications there is a need to explicitly construct a family of expander graphs. Because of this, since the first explicit construction \cite{Margulis73}, there has been an extensive body of research (see the survey papers \cite{HLW, Lu}) on explicit construction of expander families from scratch, which is the main theme in this area.

We notice that neighbourhood coronae can be used to construct new families of expander graphs from known ones. As applications of the results on the $L$-spectra of neighbourhood coronae, we will give such constructions in the second part of this paper (section \ref{sec:exp}); see Theorem \ref{thm:exp}, Constructions \ref{con:kr} and \ref{con:cay}, and Examples \ref{ex:pq} and \ref{ex:4}. Though not the main focus of research in the area of expanders, constructing more families of expanders from existing ones helps expand our base of expander families.

\section{Spectra of neighbourhood coronae}\label{S:spec}

In this section, we determine the spectra of neighbourhood coronae with the help of the \emph{coronal} of a matrix. The \emph{$M$-coronal} $\Gamma_M(x)$ of a square matrix $M$ is defined \cite{kn:McLeman11,kn:Cui12} to be the sum of the entries of the matrix $(x I_n-M)^{-1}$, that is,
$$\Gamma_M(x)=\mathbf{1}_n^T(x I_n-M)^{-1}\mathbf{1}_n,$$
where $\mathbf{1}_n$ denotes the column vector of size $n$ with all the entries equal one.

The \emph{Kronecker product} $A\otimes B$ of two matrices $A=(a_{ij})_{m \times n}$ and $B=(b_{ij})_{p \times q}$ is the $mp \times nq$ matrix obtained from $A$ by replacing each element $a_{ij}$ by $a_{ij}B$. This is an associative operation with the property that $(A\otimes B)^T=A^T\otimes B^T$ and $(A\otimes B)(C\otimes D)=AC\otimes BD$ whenever the products $AC$ and $BD$ exist. The latter implies $(A\otimes B)^{-1}=A^{-1}\otimes B^{-1}$ for nonsingular matrices $A$ and $B$. Moreover, if $A$ and $B$ are $n \times n$ and $p \times p$ matrices, then $\det(A\otimes B)=(\det A)^p (\det B)^n$. The reader is referred to \cite{kn:Kronecker} for other properties of the Kronecker product not mentioned here.

Let $G_1$ and $G_2$ be arbitrary graphs on $n_1$ and $n_2$ vertices, respectively. Following \cite{kn:Gopalapillai11}, we first label the vertices of $G_1\star G_2$ as follows. Let $V(G_1)=\left\{v_1,v_2,\ldots,v_{n_1}\right\}$ and $V(G_2)=\left\{u_1,u_2,\ldots,u_{n_2}\right\}$. For $i = 1, 2, \ldots, n_1$, let $u_1^i,u_2^i,\ldots,u_{n_2}^i$ denote the vertices of the $i$th copy of $G_2$, with the understanding that $u_j^i$ is the copy of $u_j$ for each $j$. Denote
$$
W_j=\left\{u_j^1,u_j^2,\ldots,u_j^{n_1}\right\},\;\, j = 1, 2, \ldots, n_2.
$$
Then $V(G_1)\cup W_1\cup W_2\cup\cdots\cup W_{n_2}$ is a partition of $V(G_1\star G_2)$.
It is clear that the degrees of the vertices of $G_1\star G_2$ are:
\be
\label{eq:d}
d_{G_1\star G_2}(v_i) = (n_2+1)d_{G_1}(v_i),\;\, i=1,2,\ldots,n_1
\ee
\be
\label{eq:d1}
d_{G_1\star G_2}(u_j^i) =d_{G_2}(u_j)+d_{G_1}(v_i),\;\, i=1,2,\ldots,n_1,\, j=1,2,\ldots,n_2.
\ee

\subsection{$A$-spectra of neighbourhood coronae}
\label{sec:CoronaA}

\begin{thm}\label{CoronaTh1}
Let $G_1, G_2$ be arbitrary graphs on $n_1, n_2 \ge 1$ vertices, respectively. Then
\begin{eqnarray*}
\phi\left(A(G_1\star G_2);x\right)=\big(\phi(A(G_2);x)\big)^{n_1}\cdot\prod_{i=1}^{n_1}\left(x-\lambda_i(G_1)-\Gamma_{A(G_2)}(x)\lambda_i(G_1)^2\right). \end{eqnarray*}
\end{thm}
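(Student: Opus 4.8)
The plan is to compute $\phi(A(G_1\star G_2);x)=\det(xI-A(G_1\star G_2))$ by first writing the adjacency matrix in block form with respect to the partition $V(G_1)\cup W_1\cup\cdots\cup W_{n_2}$, then reducing the determinant via the Schur complement. First I would record the block structure. Using the Kronecker product, the copies of $G_2$ contribute a block $I_{n_1}\otimes A(G_2)$ sitting on the part $W_1\cup\cdots\cup W_{n_2}$ (after suitably ordering, or equivalently $A(G_2)\otimes I_{n_1}$ with the ordering grouped by $W_j$). The key observation is how the cross edges between $V(G_1)$ and the copies of $G_2$ are encoded: a vertex $v_i$ is joined to every vertex of the $k$th copy of $G_2$ precisely when $v_i$ is adjacent to $v_k$ in $G_1$. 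Hence the off-diagonal block linking $V(G_1)$ to the $G_2$-copies is $A(G_1)\otimes \mathbf{1}_{n_2}^T$ (up to the chosen ordering), and its transpose on the other side. So I would write, with respect to the ordering $V(G_1)$ then the $G_2$-copies grouped by original vertex index,
\[
A(G_1\star G_2)=\begin{pmatrix} A(G_1) & A(G_1)\otimes \mathbf{1}_{n_2}^T \\[2pt] A(G_1)\otimes \mathbf{1}_{n_2} & I_{n_1}\otimes A(G_2)\end{pmatrix}.
\]

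Next I would form $xI-A(G_1\star G_2)$ and apply the Schur complement formula $\det\begin{pmatrix}P&Q\\R&S\end{pmatrix}=\det(S)\det(P-QS^{-1}R)$, choosing $S=xI_{n_1 n_2}-I_{n_1}\otimes A(G_2)=I_{n_1}\otimes(xI_{n_2}-A(G_2))$ as the pivot block. This block is block-diagonal, so $\det(S)=\big(\det(xI_{n_2}-A(G_2))\big)^{n_1}=\big(\phi(A(G_2);x)\big)^{n_1}$, which already produces the first factor. Its inverse is $I_{n_1}\otimes(xI_{n_2}-A(G_2))^{-1}$, clean because of the Kronecker structure. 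The main computation is then the $n_1\times n_1$ Schur complement
\[
(xI_{n_1}-A(G_1))-\big(A(G_1)\otimes\mathbf{1}_{n_2}^T\big)\big(I_{n_1}\otimes(xI_{n_2}-A(G_2))^{-1}\big)\big(A(G_1)\otimes\mathbf{1}_{n_2}\big).
\]
Using the mixed-product rule $(A\otimes B)(C\otimes D)=AC\otimes BD$, the product of the three Kronecker factors collapses to $A(G_1)^2\otimes\big(\mathbf{1}_{n_2}^T(xI_{n_2}-A(G_2))^{-1}\mathbf{1}_{n_2}\big)$. The inner scalar is by definition the coronal $\Gamma_{A(G_2)}(x)$, so the Schur complement becomes the $n_1\times n_1$ matrix $xI_{n_1}-A(G_1)-\Gamma_{A(G_2)}(x)\,A(G_1)^2$.

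Finally I would diagonalise $A(G_1)$: since $A(G_1)$ is symmetric, it is orthogonally similar to $\diag(\lambda_1(G_1),\ldots,\lambda_{n_1}(G_1))$, and both $A(G_1)$ and $A(G_1)^2$ are simultaneously diagonalised in the same basis. Hence the determinant of the Schur complement factors as $\prod_{i=1}^{n_1}\big(x-\lambda_i(G_1)-\Gamma_{A(G_2)}(x)\lambda_i(G_1)^2\big)$, and multiplying by $\det(S)=(\phi(A(G_2);x))^{n_1}$ yields exactly the claimed formula. The one point that needs care, and which I expect to be the main technical obstacle, is the validity of the Schur complement step: it requires $xI_{n_2}-A(G_2)$ to be invertible, i.e.\ $x\notin\Spec(A(G_2))$. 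I would handle this by observing that the identity to be proved is between two polynomials in $x$ (after clearing the denominators that $\Gamma_{A(G_2)}(x)$ introduces, $\phi(A(G_2);x)\,\Gamma_{A(G_2)}(x)$ is itself a polynomial), so having established equality on the cofinite set $x\notin\Spec(A(G_2))$ it extends to all $x$ by continuity/polynomial identity. Getting the vertex ordering and the exact form of the cross blocks right is the other place where a careful bookkeeping argument is needed.
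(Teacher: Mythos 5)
Your proposal is correct and follows essentially the same route as the paper's proof: block decomposition of $A(G_1\star G_2)$, Schur complement with respect to the $G_2$-block, the mixed-product rule collapsing the cross terms to $\Gamma_{A(G_2)}(x)A(G_1)^2$, and factorization via the eigenvalues of $A(G_1)$. The only differences are cosmetic (you group vertices by copy, giving $I_{n_1}\otimes A(G_2)$ and $A(G_1)\otimes\mathbf{1}_{n_2}^T$ instead of the paper's $A(G_2)\otimes I_{n_1}$ and $\mathbf{1}_{n_2}^T\otimes A(G_1)$, which are permutation-similar) and that you make explicit the invertibility/polynomial-identity point that the paper leaves implicit.
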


\begin{proof}
With respect to the partition $V(G_1)\cup W_1\cup W_2\cup\cdots\cup W_{n_2}$ of $V(G_1\star G_2)$, the adjacency matrix of $G_1\star G_2$ can be written as
\[A(G_1\star G_2)=\bmat{
                          A(G_1) & \mathbf{1}_{n_2}^T\otimes A(G_1) \\[0.2cm]
                          \left(\mathbf{1}_{n_2}^T\otimes A(G_1)\right)^T & A(G_2)\otimes I_{n_1}
                       }.\]
Thus the adjacency characteristic polynomial of $G_1\star G_2$ is given by
\begin{eqnarray*}
\phi\left(A(G_1\star G_2)\right)
&=& \det\bmat{
                          xI_{n_1}-A(G_1) & -\mathbf{1}_{n_2}^T\otimes A(G_1) \\[0.2cm]
                          -\left(\mathbf{1}_{n_2}^T\otimes A(G_1)\right)^T & xI_{n_1n_2}-A(G_2)\otimes I_{n_1}
                        }\\ [0.2cm]
&=& \det\bmat{
                          xI_{n_1}-A(G_1) & -\mathbf{1}_{n_2}^T\otimes A(G_1) \\[0.2cm]
                          -\mathbf{1}_{n_2}\otimes A(G_1) & \left(xI_{n_2}-A(G_2)\right)\otimes I_{n_1}
                       }\\ [0.2cm]
&=& \det\left(\left(xI_{n_2}-A(G_2)\right)\otimes I_{n_1}\right)\cdot\det(S),
\end{eqnarray*}
where
$$
S=xI_{n_1}-A(G_1)-(\mathbf{1}_{n_2}^T\otimes A(G_1))\left(\left(xI_{n_2}-A(G_2)\right)\otimes I_{n_1}\right)^{-1}\left(\mathbf{1}_{n_2}\otimes A(G_1)\right)
$$
is the Schur complement\cite{kn:Schur} of $\left(xI_{n_2}-A(G_2)\right)\otimes I_{n_1}$. The result follows from
\begin{eqnarray*}
\det\left(\left(xI_{n_2}-A(G_2)\right)\otimes I_{n_1}\right)&=&\left(\det(xI_{n_2}-A(G_2))\right)^{n_1}\left(\det (I_{n_1})\right)^{n_2}\\
                                                            &=&\big(\phi\left(A(G_2)\right)\big)^{n_1}
\end{eqnarray*}
and
\begin{eqnarray*}
% \nonumber to remove numbering (before each equation)
  \det (S)&=&\det\left(xI_{n_1}-A(G_1)-\left(\mathbf{1}_{n_2}^T\otimes A(G_1)\right)\left(\left(xI_{n_2}-A(G_2)\right)\otimes I_{n_1}\right)^{-1}\left(\mathbf{1}_{n_2}\otimes A(G_1)\right)\right) \\
   &=&  \det\left(xI_{n_1}-A(G_1)-\left(\mathbf{1}_{n_2}^T\big(xI_{n_2}-A(G_2)\big)^{-1}\mathbf{1}_{n_2}\right)A(G_1)^2\right)\\
    &=&  \det\left(xI_{n_1}-A(G_1)-\Gamma_{A(G_2)}(x)A(G_1)^2\right)\\
    &=&\prod_{i=1}^{n_1}\left(x-\lambda_i(G_1)-\Gamma_{A(G_2)}(x)\lambda_i(G_1)^2\right).
\end{eqnarray*}
Here in the last step we used the fact that if $\lambda$ is an eigenvalue of a matrix $A$ and $f(A)$ is a polynomial of $A$, then $f(\lambda)$ is an eigenvalue of $f(A)$.
\qed
\end{proof}

Theorem \ref{CoronaTh1} implies the following result.

\begin{cor}\label{ACorRegular}
\emph{\cite[Theorem 2.1]{kn:Gopalapillai11}}
Let $G_1$ be a graph on $n_1$ vertices and $G_2$ an $r_2$-regular graph on $n_2$ vertices, where $n_1 \ge 1, n_2 \ge 2$ and $r_2 \ge 1$. Then the $A$-spectrum of $G_1\star G_2$ consists of:
\begin{itemize}
  \item[\rm (a)] $\lambda_i(G_2)$, repeated $n_1$ times, for each $i=2,3,\ldots,n_2$;
  \item[\rm (b)] two eigenvalues
\[\frac{\lambda_j(G_1)+r_2\pm\sqrt{\left(\lambda_j(G_1)-r_2\right)^2+4n_2\lambda_j(G_1)}}{2}\]
for each $j=1, 2, \ldots, n_1$.
\end{itemize}
\end{cor}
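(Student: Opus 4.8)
The plan is to obtain the corollary directly from Theorem \ref{CoronaTh1} by evaluating the coronal $\Gamma_{A(G_2)}(x)$ in the special case that $G_2$ is $r_2$-regular. Since the coronal is the only factor in the formula of Theorem \ref{CoronaTh1} that is not already expressed through eigenvalues, a closed form for it will immediately split $\phi(A(G_1\star G_2);x)$ into pieces whose roots can be read off.

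First I would compute the coronal of a regular graph. As $G_2$ is $r_2$-regular, the all-ones vector is an eigenvector, $A(G_2)\mathbf{1}_{n_2}=r_2\mathbf{1}_{n_2}$, so $(xI_{n_2}-A(G_2))^{-1}\mathbf{1}_{n_2}=(x-r_2)^{-1}\mathbf{1}_{n_2}$ and hence
\[
\Gamma_{A(G_2)}(x)=\mathbf{1}_{n_2}^T(xI_{n_2}-A(G_2))^{-1}\mathbf{1}_{n_2}=\frac{n_2}{x-r_2}.
\]
Expanding the resolvent through its spectral projections shows more: $\mathbf{1}_{n_2}$ lies entirely in the $r_2$-eigenspace, so $\Gamma_{A(G_2)}(x)$ has a single simple pole at $x=r_2$ with residue $n_2$, no matter what the multiplicity of $r_2$ as an eigenvalue of $G_2$ is.

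Next I would substitute this into Theorem \ref{CoronaTh1}. Factoring $\phi(A(G_2);x)=(x-r_2)\prod_{k=2}^{n_2}(x-\lambda_k(G_2))$ and clearing the denominator in each term of the product, the $i$th factor becomes
\[
x-\lambda_i(G_1)-\frac{n_2\lambda_i(G_1)^2}{x-r_2}=\frac{(x-r_2)(x-\lambda_i(G_1))-n_2\lambda_i(G_1)^2}{x-r_2}.
\]
Multiplying the $n_1$ such factors against $\bigl(\phi(A(G_2);x)\bigr)^{n_1}$, the poles $(x-r_2)^{-n_1}$ cancel the $(x-r_2)^{n_1}$ coming from the $\lambda_1(G_2)=r_2$ factor, and I am left with
\[
\phi(A(G_1\star G_2);x)=\Bigl(\prod_{k=2}^{n_2}(x-\lambda_k(G_2))\Bigr)^{n_1}\prod_{i=1}^{n_1}\bigl[(x-r_2)(x-\lambda_i(G_1))-n_2\lambda_i(G_1)^2\bigr].
\]
The first product gives the eigenvalues in (a), namely each $\lambda_i(G_2)$ with $i\in\{2,\dots,n_2\}$ repeated $n_1$ times; the second is a product of $n_1$ quadratics, and solving $(x-r_2)(x-\lambda_j(G_1))-n_2\lambda_j(G_1)^2=0$ for each $j$ produces the two eigenvalues in (b).

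The step needing the most care is the cancellation of the pole at $x=r_2$, together with the bookkeeping of multiplicities. I would verify it by comparing exponents of $(x-r_2)$: if $r_2$ occurs with multiplicity $m$ in $G_2$, then $\bigl(\phi(A(G_2);x)\bigr)^{n_1}$ supplies $(x-r_2)^{n_1 m}$ while the $n_1$ denominators absorb $(x-r_2)^{n_1}$, leaving $(x-r_2)^{n_1(m-1)}$ — exactly the contribution of the $m-1$ indices $i\in\{2,\dots,n_2\}$ with $\lambda_i(G_2)=r_2$ in part (a). Confirming this ensures that the right-hand side is a genuine polynomial with no residual pole, so that its roots are precisely those listed in (a) and (b).
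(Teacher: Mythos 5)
Your proposal is correct and follows essentially the same route as the paper: substitute the regular-graph coronal $\Gamma_{A(G_2)}(x)=n_2/(x-r_2)$ into Theorem \ref{CoronaTh1} and read off the roots, the only differences being that you derive the coronal directly from $A(G_2)\mathbf{1}_{n_2}=r_2\mathbf{1}_{n_2}$ (the paper simply cites \cite{kn:McLeman11,kn:Cui12}) and that you spell out the pole cancellation at $x=r_2$ and the multiplicity bookkeeping, which the paper leaves implicit. One caveat, which applies equally to the paper's own proof: the roots of $(x-r_2)(x-\lambda_j(G_1))-n_2\lambda_j(G_1)^2=0$ are $\tfrac{1}{2}\bigl(\lambda_j(G_1)+r_2\pm\sqrt{(\lambda_j(G_1)-r_2)^2+4n_2\lambda_j(G_1)^2}\,\bigr)$, so your quadratics produce part (b) only with $4n_2\lambda_j(G_1)^2$ rather than $4n_2\lambda_j(G_1)$ under the square root; the displayed formula in the corollary is a typo in the statement, not a flaw in either argument, as one can check with $G_1=G_2=K_2$, where $\lambda_2(G_1)=-1$ gives the real eigenvalues $\pm\sqrt{3}$ from the quadratic while the printed expression would be imaginary.
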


\begin{proof}
Since $G_2$ is $r_2$-regular with $n_2$ vertices, by \cite{kn:McLeman11,kn:Cui12} we have
$$
\Gamma_{A(G_2)}(x) = \frac{n_2}{x-r_2}.
$$
The only pole of $\Gamma_{A(G_2)}(x)$ is $x=r_2$, which is equal to $\l_1(G_2)$. Thus, by Theorem \ref{CoronaTh1}, for each $i=2,3,\ldots,n_2$, $\lambda_i(G_2)$ is an eigenvalue of $G_1\star G_2$ repeated $n_1$ times. The remaining $2n_1$ eigenvalues of $G_1\star G_2$ are obtained by solving
\[x-\lambda_j(G_1)-\frac{n_2}{x-r_2}\lambda_j(G_1)^2=0\]
for each $j=1,2,\ldots,n_1$, and this yields the eigenvalues in (b).
\qed
\end{proof}

Theorem \ref{CoronaTh1} enables us to compute the $A$-spectrum of many neighbourhood coronae. In general, if we can determine the $A(G_2)$-coronal $\Gamma_{A(G_2)}(x)$, then we are able to compute the $A$-spectrum of $G_1\star G_2$. The following corollary aims to illustrate this method (the case when $p = q$ is also covered by Corollary \ref{ACorRegular}). Denote by $K_{p,q}$ the complete bipartite graph with $p, q \ge 1$ vertices in the two parts of its bipartition.

\begin{cor}\label{ACorComplete}
Let $G$ be a graph on $n \ge 1$ vertices, and let $p, q \ge 1$ be integers. Then the $A$-spectrum of $G \star K_{p,q}$ consists of:
\begin{itemize}
  \item[\rm (a)] $0$, repeated $n \left(p+q-2\right)$ times;
  \item[\rm (b)] the three roots of the equation
 \be
 \label{eq:3}
  x^3-\lambda_j(G)x^2-\left(pq+(p+q)\lambda_j(G)^2\right)x-pq\lambda_j(G)\big(2\lambda_j(G)-1\big)=0
 \ee
 for each $j=1,2,\ldots,n$.
\end{itemize}
\end{cor}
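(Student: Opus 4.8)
The plan is to apply Theorem \ref{CoronaTh1} with $G_1=G$ and $G_2=K_{p,q}$, so that the entire problem reduces to two ingredients: (i) the adjacency characteristic polynomial of $K_{p,q}$, and (ii) the coronal $\Gamma_{A(K_{p,q})}(x)$. For (i) I would use the standard fact that the adjacency spectrum of $K_{p,q}$ consists of $\pm\sqrt{pq}$, each once, together with $0$ of multiplicity $p+q-2$, so that $\phi(A(K_{p,q});x)=x^{p+q-2}(x^2-pq)$.

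The genuinely new computation is the coronal. Since $K_{p,q}$ is not regular unless $p=q$, I cannot simply invoke the formula $\Gamma=n_2/(x-r_2)$ used in the proof of Corollary \ref{ACorRegular}; instead I would solve $(xI-A(K_{p,q}))y=\mathbf{1}$ directly. Writing $A(K_{p,q})$ in block form according to the two parts of the bipartition, symmetry within each part forces the solution $y$ to be constant, say $a$, on the $p$-side and constant, say $b$, on the $q$-side. This collapses the system to the $2\times 2$ system $xa-qb=1$, $-pa+xb=1$, whose coefficient determinant is $x^2-pq$. Solving gives $a=(x+q)/(x^2-pq)$ and $b=(x+p)/(x^2-pq)$, and hence
$$\Gamma_{A(K_{p,q})}(x)=pa+qb=\frac{(p+q)x+2pq}{x^2-pq}.$$

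With these two ingredients in hand, Theorem \ref{CoronaTh1} yields
$$\phi(A(G\star K_{p,q});x)=\big(x^{p+q-2}(x^2-pq)\big)^{n}\prod_{j=1}^{n}\left(x-\lambda_j(G)-\frac{(p+q)x+2pq}{x^2-pq}\,\lambda_j(G)^2\right).$$
I would then clear denominators inside the product: placing each factor over the common denominator $x^2-pq$ turns its numerator, after a routine expansion, into exactly the cubic appearing on the left-hand side of \eqref{eq:3}. Pulling the $n$ factors of $(x^2-pq)^{-1}$ out of the product produces $(x^2-pq)^{-n}$, which cancels precisely against the $(x^2-pq)^{n}$ coming from $\big(\phi(A(K_{p,q}))\big)^{n}$. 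What remains is $x^{n(p+q-2)}$ times the product of the $n$ cubics, which is exactly the claimed factorisation; reading off the roots gives $0$ with multiplicity $n(p+q-2)$ in part (a) and the three roots of \eqref{eq:3} for each $j$ in part (b).

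The step to watch is this cancellation of the $(x^2-pq)$ factors. It is what guarantees that $\pm\sqrt{pq}$ do not survive as eigenvalues in their own right but are instead absorbed into the cubic factors: the poles of $\Gamma_{A(K_{p,q})}(x)$ at $x=\pm\sqrt{pq}$ match exactly the extra roots contributed by $\big(\phi(A(K_{p,q}))\big)^{n}$. Once the algebra is organised so that this cancellation is transparent, the rest of the argument is bookkeeping of multiplicities.
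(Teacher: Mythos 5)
Your proposal is correct and takes essentially the same route as the paper: apply Theorem \ref{CoronaTh1} with $G_2=K_{p,q}$, substitute $\Gamma_{A(K_{p,q})}(x)=\frac{(p+q)x+2pq}{x^2-pq}$ together with the spectrum of $K_{p,q}$, and use the fact that the poles $\pm\sqrt{pq}$ of the coronal are absorbed by the factors of $\big(\phi(A(K_{p,q}))\big)^n$. The only differences are cosmetic: the paper quotes the coronal formula from \cite[Proposition 8]{kn:McLeman11} rather than deriving it via the symmetric $2\times 2$ reduction as you do, and it leaves the denominator-clearing and cancellation implicit ("the result follows immediately") where you carry them out explicitly.
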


\begin{proof}
It is known \cite[Proposition 8]{kn:McLeman11} that the $A(K_{p,q})$-coronal of $K_{p,q}$ is given by
$$
\Gamma_{A(K_{p,q})}(x) = \frac{(p+q)x+2pq}{x^2-pq}.
$$
The $A$-spectrum of $K_{p,q}$ \cite{kn:Brouwer12,kn:Cvetkovic10} consists of $\pm\sqrt{pq}$ with multiplicity one, and $0$ with multiplicity $p+q-2$. Since $\pm\sqrt{pq}$ are the poles of $\Gamma_{A(K_{p,q})}(x)$, the result follows from Theorem \ref{CoronaTh1} immediately.
\qed
\end{proof}

\begin{rem}
{\em Equation (\ref{eq:3}) may have repeated roots for the same $j$. Also, for different $j$ the corresponding equations (\ref{eq:3}) may have common roots. The multiplicity of each eigenvalue in (b) of Corollary \ref{ACorComplete} is the sum of its multiplicities (which can be $0$) as a root of these equations for $j=1, 2, \ldots, n$.}
\end{rem}

In \cite{kn:Barik07,kn:McLeman11}, many infinite families of pairs of $A$-cospectral graphs are generated by using the corona construction. Similarly, we can use the neighbourhood corona construction to obtain many $A$-cospectral graphs, as stated in the following corollary of Theorem \ref{CoronaTh1}. Note that the condition $\Gamma_{A(G)}(x)=\Gamma_{A(G')}(x)$ in (b) is not redundant because $A$-cospectral graphs may have different $A$-coronals \cite[Remark 3]{kn:McLeman11}.

\begin{cor}
\label{cor:acosp}
Let $G$ and $G'$ be $A$-cospectral graphs, and $H$ an arbitrary graph. Then
\begin{itemize}
  \item[\rm (a)]   $G \star H$ and $G' \star H$ are $A$-cospectral;
  \item[\rm (b)]  $H\star G$ and $H\star G'$ are also $A$-cospectral provided $\Gamma_{A(G)}(x)=\Gamma_{A(G')}(x)$.
\end{itemize}
\end{cor}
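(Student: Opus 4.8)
The plan is to read both claims directly off the product formula in Theorem \ref{CoronaTh1}, exploiting the fact that its right-hand side is assembled from only two pieces of data about the inner factor $G_2$ — its adjacency characteristic polynomial $\phi(A(G_2);x)$ and its coronal $\Gamma_{A(G_2)}(x)$ — together with the adjacency eigenvalues $\lambda_i(G_1)$ of the outer factor $G_1$. Everything reduces to substituting the two graphs into this formula and comparing the resulting factors.

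For part (a) I would set $G_1 \in \{G, G'\}$ and $G_2 = H$. Since $G$ and $G'$ are $A$-cospectral, they share the same order $n_1$ and the same multiset of adjacency eigenvalues $\lambda_1, \ldots, \lambda_{n_1}$. The factor $\big(\phi(A(H);x)\big)^{n_1}$ is then identical for the two coronae, and because the coronal $\Gamma_{A(H)}(x)$ depends only on the fixed graph $H$, the product $\prod_{i}\big(x-\lambda_i-\Gamma_{A(H)}(x)\lambda_i^2\big)$ is identical as well. Hence $\phi(A(G\star H);x) = \phi(A(G'\star H);x)$, so the two graphs are $A$-cospectral, and no extra hypothesis is needed.

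For part (b) I would instead take $G_1 = H$ and $G_2 \in \{G, G'\}$. The first factor $\big(\phi(A(G_2);x)\big)^{n_1}$ now matches for $G$ and $G'$ precisely because they are $A$-cospectral. The product factor, however, involves $\Gamma_{A(G_2)}(x)$, and this is where cospectrality alone fails: as the preceding remark notes, $A$-cospectral graphs may have different coronals. This is the single genuine obstacle in the argument, and it is resolved exactly by the added hypothesis $\Gamma_{A(G)}(x)=\Gamma_{A(G')}(x)$. Under it, the product $\prod_{i}\big(x-\lambda_i(H)-\Gamma_{A(G)}(x)\lambda_i(H)^2\big)$ coincides for the two choices of inner factor, and the two characteristic polynomials agree.

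Beyond the routine substitution, the only conceptual point worth stressing is the asymmetry in Theorem \ref{CoronaTh1} between the roles of $G_1$ and $G_2$: the outer graph enters only through its eigenvalues, whereas the inner graph enters through both its eigenvalue data and its coronal. This asymmetry is exactly what explains why (a) holds unconditionally while (b) must carry the matching-coronal assumption.
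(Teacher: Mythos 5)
Your proposal is correct and matches the paper's approach: the paper states this as an immediate corollary of Theorem \ref{CoronaTh1}, and your argument—substituting $G_1\in\{G,G'\}$, $G_2=H$ for (a) and $G_1=H$, $G_2\in\{G,G'\}$ for (b), then comparing the two factors of the characteristic polynomial—is exactly the intended reasoning. Your observation about the asymmetry between the roles of $G_1$ and $G_2$ (eigenvalues only versus eigenvalues plus coronal) also correctly explains why the paper's remark that $A$-cospectral graphs may have different $A$-coronals makes the extra hypothesis in (b) necessary.
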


\subsection{$Q$-spectra of neighbourhood coronae}
\label{sec:CoronaQ}

\begin{thm}\label{CoronaTh2}
Let $G_1$ be a $r_1$-regular graph on $n_1$ vertices and $G_2$ an arbitrary graph on $n_2$ vertices, where $n_1 \ge 2, n_2 \ge 1$ and $r_1 \ge 1$. Then
\begin{eqnarray*}
\phi\left(Q(G_1\star G_2);x\right)=\big(\phi\left(Q(G_2);x-r_1\right)\big)^{n_1}\cdot\prod_{i=1}^{n_1}\left(x-n_2r_1-\nu_i(G_1)-\Gamma_{Q(G_2)}(x-r_1)(\nu_i(G_1)-r_1)^2\right).
\end{eqnarray*}
\end{thm}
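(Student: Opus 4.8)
The plan is to mimic the Schur-complement computation of Theorem~\ref{CoronaTh1}, but applied to the signless Laplacian $Q(G_1\star G_2)=D(G_1\star G_2)+A(G_1\star G_2)$, exploiting the regularity of $G_1$ throughout. First I would read off the block form of $Q(G_1\star G_2)$ with respect to the partition $V(G_1)\cup W_1\cup\cdots\cup W_{n_2}$. Since $G_1$ is $r_1$-regular, the degree formulas (\ref{eq:d}) and (\ref{eq:d1}) give $d_{G_1\star G_2}(v_i)=(n_2+1)r_1$ and $d_{G_1\star G_2}(u_j^i)=d_{G_2}(u_j)+r_1$, so that
\[
Q(G_1\star G_2)=\bmat{
(n_2+1)r_1 I_{n_1}+A(G_1) & \mathbf{1}_{n_2}^T\otimes A(G_1) \\[0.2cm]
\mathbf{1}_{n_2}\otimes A(G_1) & \big(Q(G_2)+r_1 I_{n_2}\big)\otimes I_{n_1}
}.
\]
The crucial simplification is that the bottom-right block collapses to $\big(Q(G_2)+r_1 I_{n_2}\big)\otimes I_{n_1}$, because $D(G_2)\otimes I_{n_1}+r_1 I_{n_1 n_2}+A(G_2)\otimes I_{n_1}=(D(G_2)+A(G_2)+r_1 I_{n_2})\otimes I_{n_1}$ together with $Q(G_2)=D(G_2)+A(G_2)$; the uniform shift by $r_1$ is exactly what the regularity of $G_1$ buys us.

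Next I would form $xI-Q(G_1\star G_2)$ and take the Schur complement of its bottom-right block $\big((x-r_1)I_{n_2}-Q(G_2)\big)\otimes I_{n_1}$, just as in Theorem~\ref{CoronaTh1}. This factors the characteristic polynomial as the product of $\det\!\big(((x-r_1)I_{n_2}-Q(G_2))\otimes I_{n_1}\big)$ and $\det(S)$. The first determinant equals $\big(\phi(Q(G_2);x-r_1)\big)^{n_1}$ by the Kronecker determinant identity, which already produces the first factor in the statement. For the Schur complement, the same mixed-product computation as before, namely $(\mathbf{1}_{n_2}^T\otimes A(G_1))\big(((x-r_1)I_{n_2}-Q(G_2))^{-1}\otimes I_{n_1}\big)(\mathbf{1}_{n_2}\otimes A(G_1))=\Gamma_{Q(G_2)}(x-r_1)\,A(G_1)^2$, yields
\[
S=xI_{n_1}-(n_2+1)r_1 I_{n_1}-A(G_1)-\Gamma_{Q(G_2)}(x-r_1)\,A(G_1)^2.
\]

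Finally I would diagonalise $A(G_1)$ and rewrite everything in terms of the $Q$-eigenvalues of $G_1$. Since $G_1$ is $r_1$-regular we have $Q(G_1)=r_1 I_{n_1}+A(G_1)$, so the multiset of $Q$-eigenvalues of $G_1$ is $\{r_1+\lambda_i(G_1):1\le i\le n_1\}$; as the product in $\det(S)$ ranges over all eigenvalues, I may substitute $\lambda_i(G_1)=\nu_i(G_1)-r_1$ throughout. Doing so in $\det(S)=\prod_{i=1}^{n_1}\big(x-(n_2+1)r_1-\lambda_i(G_1)-\Gamma_{Q(G_2)}(x-r_1)\lambda_i(G_1)^2\big)$ and using $-(n_2+1)r_1+r_1=-n_2 r_1$ turns each factor into $x-n_2 r_1-\nu_i(G_1)-\Gamma_{Q(G_2)}(x-r_1)(\nu_i(G_1)-r_1)^2$, which is exactly the claimed product. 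I expect the only genuine pitfalls to be bookkeeping ones: correctly assembling the degree matrix (in particular noticing that the constant $r_1$ detaches from the $G_2$-part and shifts the argument of $Q(G_2)$ and of its coronal to $x-r_1$), and tracking how the top-left diagonal term $(n_2+1)r_1$ recombines with the substitution $\lambda_i(G_1)=\nu_i(G_1)-r_1$ to produce the clean $x-n_2 r_1-\nu_i(G_1)$. The regularity of $G_1$ is essential at every one of these points, since otherwise $D(G_1)$ would fail to be scalar and neither the block collapse nor the eigenvalue substitution would go through.
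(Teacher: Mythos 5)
Your proposal is correct and takes essentially the same approach as the paper: the block form of $Q(G_1\star G_2)$, the Schur complement with respect to the bottom-right block $\big((x-r_1)I_{n_2}-Q(G_2)\big)\otimes I_{n_1}$, the Kronecker determinant identity, and the coronal identification all match. The only cosmetic difference is that the paper keeps the top-left block as $n_2D(G_1)+Q(G_1)$ and diagonalises $Q(G_1)$ and $A(G_1)^2$ simultaneously, whereas you expand everything in terms of $A(G_1)$ and substitute $\lambda_i(G_1)=\nu_i(G_1)-r_1$ at the end; these are equivalent bookkeeping choices.
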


\begin{proof}
Denote by $\mathbf{0}_{n_1\times n_1}$ the all-0 $n_1\times n_1$ matrix.
With respect to the partition $V(G_1)\cup W_1\cup W_2\cup\cdots\cup W_{n_2}$ of $V(G_1\star G_2)$ defined at the beginning of this section, we have
\begin{eqnarray*}
% \nonumber to remove numbering (before each equation)
  D(G_1\star G_2)=\bmat{
          (n_2+1)D(G_1) & \mathbf{1}_{n_2}^T\otimes \mathbf{0}_{n_1\times n_1} \\[0.2cm]
          \left(\mathbf{1}_{n_2}^T\otimes \mathbf{0}_{n_1\times n_1}\right)^T & D(G_2)\otimes I_{n_1}+I_{n_2}\otimes D(G_1)
       }
\end{eqnarray*}
and
\[Q(G_1\star G_2)=\bmat{
                          n_2D(G_1)+Q(G_1) & \mathbf{1}_{n_2}^T\otimes A(G_1) \\[0.2cm]
                           \left(\mathbf{1}_{n_2}^T\otimes A(G_1)\right)^T & Q(G_2)\otimes I_{n_1}+I_{n_2}\otimes D(G_1)
                       }.\]
Since $G_1$ is $r_1$-regular, we have
\begin{eqnarray*}
\phi\left(Q(G_1\star G_2)\right)
&=& \det(xI-Q(G_1\star G_2))\\[0.2cm]
&=& \det\bmat{
                          (x-n_2r_1)I_{n_1}-Q(G_1) & -\mathbf{1}_{n_2}^T\otimes A(G_1) \\[0.2cm]
                          -\left(\mathbf{1}_{n_2}^T\otimes A(G_1)\right)^T & xI_{n_1n_2}-Q(G_2)\otimes I_{n_1}-r_1I_{n_2}\otimes I_{n_1}
                        }\\[0.2cm]
&=& \det\bmat{
                          (x-n_2r_1)I_{n_1}-Q(G_1) & -\mathbf{1}_{n_2}^T\otimes A(G_1) \\[0.2cm]
                          -\mathbf{1}_{n_2}\otimes A(G_1) & \big((x-r_1)I_{n_2}-Q(G_2)\big)\otimes I_{n_1}
                        }\\[0.2cm]
&=& \det\left(\big((x-r_1)I_{n_2}-Q(G_2)\big)\otimes I_{n_1}\right)\cdot\det (S),
\end{eqnarray*}
where
$$
S=(x-n_2r_1)I_{n_1}-Q(G_1)-(\mathbf{1}_{n_2}^T\otimes A(G_1))\left(\big((x-r_1)I_{n_2}-Q(G_2)\big)\otimes I_{n_1}\right)^{-1}\left(\mathbf{1}_{n_2}\otimes A(G_1)\right)
$$
is the Schur complement\cite{kn:Schur} of $\big((x-r_1)I_{n_2}-Q(G_2)\big)\otimes I_{n_1}$. Note that $\lambda$ is an eigenvalue of $A(G_1)$ with an eigenvector $\mathrm{v}$
if and only if $\lambda+r_1$ is an eigenvalue of $Q(G_1)$ with the same eigenvector $\mathrm{v}$. Hence
\begin{eqnarray*}
\det\left(\big((x-r_1)I_{n_2}-Q(G_2)\big)\otimes I_{n_1}\right)&=&\left(\det\big((x-r_1)I_{n_2}-Q(G_2)\big)\right)^{n_1}\left(\det I_{n_1}\right)^{n_2}\\
                                                            &=&\big(\phi\left(Q(G_2);x-r_1\right)\big)^{n_1}
\end{eqnarray*}
and
\begin{eqnarray*}
% \nonumber to remove numbering (before each equation)
  \det (S) &=&  \det\left((x-n_2r_1)I_{n_1}-Q(G_1)-\left(\mathbf{1}_{n_2}^T\big((x-r_1)I_{n_2}-Q(G_2)\big)^{-1}\mathbf{1}_{n_2}\right)A(G_1)^2\right)\\
    &=&  \det\left((x-n_2r_1)I_{n_1}-Q(G_1)-\Gamma_{Q(G_2)}(x-r_1)A(G_1)^2\right)\\
    &=&\prod_{i=1}^{n_1}\left(x-n_2r_1-\nu_i(G_1)-\Gamma_{Q(G_2)}(x-r_1)(\nu_i(G_1)-r_1)^2\right).
\end{eqnarray*}
The result then follows immediately.
\qed
\end{proof}

Theorem \ref{CoronaTh2} implies that, if we can determine the $Q(G_2)$-coronal, then we are able to compute the signless Laplacian eigenvalues of $G_1\star G_2$. Since this happens when $G_2$ is regular or a complete bipartite graph, we obtain the following two corollaries.

\begin{cor}\label{QCorRegular}
Let $G_1$ be an $r_1$-regular graph on $n_1$ vertices and $G_2$ an $r_2$-regular graph on $n_2$ vertices, where $n_1, n_2 \ge 2$ and $r_1, r_2 \ge 1$. Then the $Q$-spectrum of $G_1\star G_2$ consists of:
\begin{itemize}
  \item[\rm (a)] $r_1+\nu_i(G_2)$, repeated $n_1$ times, for each $i=1,2,\ldots,n_2-1$;
  \item[\rm (b)] two eigenvalues which are the roots of the equation
  \[x^2-\Big((n_2+1)r_1+2r_2+\nu_j(G_1)\Big)x+\Big(2n_2r_1r_2+(2n_2r_1+2r_2+r_1)\nu_j(G_1)-n_2\nu_j(G_1)^2\Big)=0\]
for each $j=1,2,\ldots,n_1$.
\end{itemize}
\end{cor}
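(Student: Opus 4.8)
The plan is to derive Corollary \ref{QCorRegular} as a specialization of Theorem \ref{CoronaTh2} by substituting the known $Q$-spectrum and $Q$-coronal of an $r_2$-regular graph $G_2$. For an $r_2$-regular graph on $n_2$ vertices, the signless Laplacian $Q(G_2) = D(G_2) + A(G_2) = r_2 I_{n_2} + A(G_2)$, so the $Q$-eigenvalues are exactly $\nu_i(G_2) = r_2 + \lambda_{n_2-i+1}(G_2)$; in particular the largest is $\nu_{n_2}(G_2) = 2r_2$, attained by the all-ones eigenvector. First I would record that $\Gamma_{Q(G_2)}(x) = \Gamma_{A(G_2)}(x - r_2) = n_2/(x - r_2 - r_2) = n_2/(x - 2r_2)$, since the coronal of $Q(G_2)$ shifts that of $A(G_2)$ by $r_2$ and the regular graph's $A$-coronal is $n_2/(x - r_2)$ (as used in Corollary \ref{ACorRegular}). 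The only pole of $\Gamma_{Q(G_2)}(x)$ is at $x = 2r_2 = \nu_{n_2}(G_2)$.

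Next I would feed these into the two factors of Theorem \ref{CoronaTh2}. The first factor $\big(\phi(Q(G_2); x - r_1)\big)^{n_1}$ contributes each root $x = r_1 + \nu_i(G_2)$ with multiplicity $n_1$, for $i = 1, 2, \ldots, n_2$. However, the value $i = n_2$ (giving $x = r_1 + 2r_2$) corresponds precisely to the pole of $\Gamma_{Q(G_2)}(x - r_1)$ appearing in the product factor, so those contributions interact with the second factor rather than surviving as bare eigenvalues; this is exactly why part (a) lists $i = 1, \ldots, n_2 - 1$ only. The remaining $2n_1$ eigenvalues come from the product $\prod_{i=1}^{n_1}\big(x - n_2 r_1 - \nu_i(G_1) - \Gamma_{Q(G_2)}(x - r_1)(\nu_i(G_1) - r_1)^2\big)$: substituting $\Gamma_{Q(G_2)}(x - r_1) = n_2/(x - r_1 - 2r_2)$ and clearing the denominator $x - r_1 - 2r_2$ turns each factor into a quadratic in $x$, yielding the two roots in part (b) for each $j = 1, \ldots, n_1$.

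The main obstacle, and the step requiring the most care, is the bookkeeping of multiplicities near the pole $x = r_1 + 2r_2$, matching the clean behaviour claimed in the two corollaries of Section \ref{sec:CoronaA}. When I clear the denominator in the $j$th factor of the product, each factor $x - n_2 r_1 - \nu_j(G_1) - \frac{n_2}{x - r_1 - 2r_2}(\nu_j(G_1) - r_1)^2$ becomes $\frac{1}{x - r_1 - 2r_2}$ times a quadratic; across all $j$ this introduces $n_1$ spurious poles at $x = r_1 + 2r_2$ in the product, which must cancel against the $n_1$ copies of the root $x = r_1 + 2r_2$ contributed by the first factor. I would verify this cancellation explicitly, confirming that the net multiplicity of $r_1 + 2r_2$ as an honest eigenvalue is governed entirely by part (b), so that part (a) legitimately excludes $i = n_2$.

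Finally I would simplify the cleared quadratic into the stated form. Multiplying out gives
\begin{eqnarray*}
\big(x - n_2 r_1 - \nu_j(G_1)\big)\big(x - r_1 - 2r_2\big) - n_2\big(\nu_j(G_1) - r_1\big)^2 = 0,
\end{eqnarray*}
and after expanding, collecting the coefficient of $x$, and using $r_1$-regularity of $G_1$ (so that $\nu_j(G_1)$ ranges over $r_1 + \lambda_k(G_1)$ and the cross terms in $(\nu_j(G_1) - r_1)^2$ simplify) I expect to recover precisely the quadratic
\begin{eqnarray*}
x^2 - \big((n_2 + 1)r_1 + 2r_2 + \nu_j(G_1)\big)x + \big(2n_2 r_1 r_2 + (2n_2 r_1 + 2r_2 + r_1)\nu_j(G_1) - n_2 \nu_j(G_1)^2\big) = 0
\end{eqnarray*}
displayed in part (b). This last manipulation is routine algebra, so the only genuinely delicate point remains the multiplicity accounting at the pole.
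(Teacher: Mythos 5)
Your proof is correct and takes essentially the same route as the paper: substitute the regular-graph coronal $\Gamma_{Q(G_2)}(x)=n_2/(x-2r_2)$ into Theorem \ref{CoronaTh2}, observe that its only pole $x=r_1+2r_2$ absorbs the $i=n_2$ contribution of the factor $\big(\phi(Q(G_2);x-r_1)\big)^{n_1}$, and clear denominators in the product to obtain the quadratics of part (b). The only cosmetic differences are that you derive the coronal by the shift $\Gamma_{Q(G_2)}(x)=\Gamma_{A(G_2)}(x-r_2)$ instead of citing \cite{kn:Cui12}, and you carry out the expansion of $\big(x-n_2r_1-\nu_j(G_1)\big)\big(x-r_1-2r_2\big)-n_2\big(\nu_j(G_1)-r_1\big)^2$ explicitly (correctly), which the paper leaves implicit.
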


\begin{proof}
Since $G_2$ is $r_2$-regular with $n_2$ vertices, we have \cite{kn:Cui12}
$$
\Gamma_{Q(G_2)}(x) = \frac{n_2}{x-2r_2}.
$$
Thus the only pole of $\Gamma_{Q(G_2)}(x-r_1)$ is $x=r_1+2r_2$, and it corresponds to the maximum signless Laplacian eigenvalue $x-r_1=2r_2$ of $G_2$. The result follows from this and Theorem \ref{CoronaTh2}.
\qed
\end{proof}

Similarly, we can compute the $Q$-spectrum of $G_1\star K_{p,q}$. (The case where $p=q$ is also covered by Corollary \ref{QCorRegular}.)

\begin{cor}\label{QCorComplete}
Let $G$ be an $r$-regular graph on $n$ vertices, and let $p,q \ge 1$ be integers. Then the $Q$-spectrum of $G \star K_{p,q}$ consists of:
\begin{itemize}
  \item[\rm (a)]  $p+r$, repeated $n \left(q-1\right)$ times;
  \item[\rm (b)]  $q+r$, repeated $n \left(p-1\right)$ times;
  \item[\rm (c)]  the three roots of the equation
  \be
  \label{eq:3a}
  (x-r)(x-r-p-q)\left(x-(p+q)r-\nu_j(G)\right)-\left((p+q)(x-r)-(p-q)^2\right)(\nu_j(G)-r)^2=0
  \ee
  for each $j=1,2,\ldots,n$.
\end{itemize}
\end{cor}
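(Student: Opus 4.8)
The plan is to apply Theorem \ref{CoronaTh2} directly, since $K_{p,q}$ is a specific graph whose $Q$-coronal and $Q$-spectrum are known. First I would record the two ingredients needed to specialise the general formula. The $Q$-spectrum of $K_{p,q}$ (with, say, the $p$-side and $q$-side vertices) is well known: $\nu = 0$ once, $\nu = p$ with multiplicity $q-1$, $\nu = q$ with multiplicity $p-1$, and $\nu = p+q$ once (the two nonzero ``bulk'' eigenvalues come from the signless Laplacian having $D = \mathrm{diag}$ of degrees and the bipartite block structure). For the $Q$-coronal, I would invoke the computation in \cite{kn:Cui12} (analogous to the $A$-coronal in Corollary \ref{ACorComplete}), which gives a rational function of the form
\[
\Gamma_{Q(K_{p,q})}(y) = \frac{(p+q)y + 2pq - (p-q)^2 \cdot(\text{something})}{(y-p)(y-q)},
\]
whose poles are exactly the two ``non-bulk'' nonzero $Q$-eigenvalues $p$ and $q$ of $K_{p,q}$; I would pin down the exact numerator from the cited source rather than rederive it.

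Next I would substitute $G_2 = K_{p,q}$, $n_2 = p+q$, into the statement of Theorem \ref{CoronaTh2}. The factor $\big(\phi(Q(K_{p,q}); x-r)\big)^{n}$ contributes the eigenvalues in parts (a) and (b): shifting the $Q$-spectrum of $K_{p,q}$ by $r$ turns $\nu = p$ into $x = p+r$ (multiplicity $q-1$, then raised to the $n$th power to give total multiplicity $n(q-1)$) and $\nu = q$ into $x = q+r$ with total multiplicity $n(p-1)$. The eigenvalues $\nu = 0$ and $\nu = p+q$ of $K_{p,q}$ become poles of $\Gamma_{Q(K_{p,q})}(x-r)$ and so are \emph{cancelled} by the zeros of the product term over $i$, exactly as in the proof of Corollary \ref{QCorRegular}; these cancelled values feed into part (c) instead.

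The remaining eigenvalues come from the product $\prod_{j=1}^{n}\big(x - nr - \nu_j(G) - \Gamma_{Q(K_{p,q})}(x-r)(\nu_j(G)-r)^2\big)$ with $n_2 r_1 = (p+q)r$. For each fixed $j$, setting this factor to zero and clearing the denominator $(x-r-p)(x-r-q)$ of the coronal produces a cubic in $x$; the three roots of that cubic are the three eigenvalues in part (c). The bulk of the work is purely algebraic: I would multiply out $\big(x-(p+q)r-\nu_j(G)\big)(x-r-p)(x-r-q)$ minus $\big((p+q)(x-r)+2pq\big)(\nu_j(G)-r)^2$ and massage it into the displayed form \eqref{eq:3a}, where the coefficient $(p+q)(x-r)-(p-q)^2$ reflects the precise numerator of the $Q$-coronal. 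The main obstacle, and the only genuinely delicate point, is getting the $Q$-coronal of $K_{p,q}$ exactly right --- in particular the $(p-q)^2$ term that breaks the symmetry between the two sides when $p\neq q$ --- and then verifying that the algebra collapses cleanly into \eqref{eq:3a}; a useful consistency check is to set $p=q=r_2$ and confirm that \eqref{eq:3a} reduces to the quadratic-times-linear factorisation predicted by Corollary \ref{QCorRegular}.
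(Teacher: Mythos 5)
Your overall plan coincides with the paper's: specialise Theorem \ref{CoronaTh2} to $G_2=K_{p,q}$ using its known $Q$-spectrum (which you state correctly) and its $Q$-coronal. The genuine gap is that the coronal you commit to is structurally wrong, and not merely in an unspecified constant. The poles of $\Gamma_M(y)=\mathbf{1}^T(yI-M)^{-1}\mathbf{1}$ can occur only at eigenvalues of $M$ whose eigenspaces are not orthogonal to $\mathbf{1}$ (expand $\mathbf{1}$ in an orthonormal eigenbasis). For $Q(K_{p,q})$ the eigenvectors attached to $p$ (multiplicity $q-1$) and $q$ (multiplicity $p-1$) are differences of coordinates within one part, hence orthogonal to $\mathbf{1}$, so $p$ and $q$ are \emph{never} poles; the eigenvectors attached to $0$ and $p+q$ are $(\mathbf{1}_p,-\mathbf{1}_q)$ and $(q\mathbf{1}_p,p\mathbf{1}_q)$, which are not orthogonal to $\mathbf{1}$ (when $p\ne q$, respectively always). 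The correct coronal, quoted by the paper from \cite{kn:Cui12}, is
\[
\Gamma_{Q(K_{p,q})}(y)=\frac{(p+q)y-(p-q)^2}{y\left(y-(p+q)\right)},
\]
with poles at $0$ and $p+q$, not at $p$ and $q$ as your displayed formula with denominator $(y-p)(y-q)$ asserts. This is precisely the point on which the corollary turns: because $p$ and $q$ are not poles, the factors $(x-r-p)^{n(q-1)}(x-r-q)^{n(p-1)}$ of $\big(\phi(Q(K_{p,q});x-r)\big)^{n}$ survive intact and give parts (a) and (b), while the factors $(x-r)^{n}(x-r-p-q)^{n}$ are cancelled against the denominators of the $n$ coronal terms and feed into the cubics of part (c). With your pole structure the cancellation would go the other way and (a) and (b) would be destroyed.

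The proposal is also internally inconsistent, which confirms the error is substantive rather than notational: your cancellation paragraph (correctly) says the poles of $\Gamma_{Q(K_{p,q})}(x-r)$ correspond to $\nu=0$ and $\nu=p+q$, contradicting your displayed coronal; and your final step clears the denominator $(x-r-p)(x-r-q)$, which cannot produce equation (\ref{eq:3a}), whose quadratic factor is $(x-r)(x-r-p-q)$. Your hedge that you would ``pin down the exact numerator from the cited source'' does not cover this, since it is the denominator you fixed and then used. With the correct coronal, multiplying each factor $x-(p+q)r-\nu_j(G)-\Gamma_{Q(K_{p,q})}(x-r)\,(\nu_j(G)-r)^2$ by $(x-r)(x-r-p-q)$ yields (\ref{eq:3a}) immediately, with essentially no algebra to massage; your consistency check at $p=q$ is a sensible addition, but as written the derivation of part (c) would simply fail.
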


\begin{proof}
It is known \cite{kn:Cui12} that the $Q(K_{p,q})$-coronal of $K_{p,q}$ is
$$
\Gamma_{Q(K_{p,q})}(x) = \frac{(p+q)x-(p-q)^2}{x^2-(p+q)x}.
$$
Thus the poles of $\Gamma_{Q(K_{p,q})}(x-r)$ are $x=r$ and $x=r+p+q$. It is known \cite{kn:Cui12} that the $Q$-spectrum of $K_{p,q}$ consists of $0$ and $p+q$ each with multiplicity one, $p$ with multiplicity $q-1$, and $q$ with multiplicity $p-1$. From these and Theorem \ref{CoronaTh2} the result follows.
\qed
\end{proof}

\begin{rem}
{\em The multiplicity of each eigenvalue in (c) of Corollary \ref{QCorComplete} is the sum of its multiplicities (which can be $0$) as a root of the equations (\ref{eq:3a}) for $j=1, 2, \ldots, n$.}
\end{rem}

Theorem \ref{CoronaTh2} enables us to construct many pairs of $Q$-cospectral graphs, as shown in the following corollary.

\begin{cor}
\label{cor:qcosp}
\begin{itemize}
  \item[\rm (a)]  If $G_1$ and $G_2$ are $Q$-cospectral regular graphs, and $H$ is any graph, then $G_1\star H$ and $G_2\star H$ are $Q$-cospectral.
  \item[\rm (b)]  If $G$ is a regular graph, and $H_1$ and $H_2$ are $Q$-cospectral graphs with $\Gamma_{Q(H_1)}(x)=\Gamma_{Q(H_2)}(x)$, then $G\star H_1$ and $G\star H_2$ are $Q$-cospectral.
\end{itemize}
\end{cor}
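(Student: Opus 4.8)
The plan is to derive both parts directly from the explicit factorization of $\phi(Q(G_1\star G_2);x)$ in Theorem \ref{CoronaTh2}, reading off which ingredients of that formula are forced to agree for the two coronae in question. Schematically, for an $r_1$-regular graph on $n_1$ vertices coronated with an arbitrary graph on $n_2$ vertices, the $Q$-characteristic polynomial is the product of $\big(\phi(Q(G_2);x-r_1)\big)^{n_1}$ with $\prod_{i=1}^{n_1}\big(x-n_2r_1-\nu_i(G_1)-\Gamma_{Q(G_2)}(x-r_1)(\nu_i(G_1)-r_1)^2\big)$. Thus it depends only on the integers $n_1,n_2$, the valency $r_1$, the multiset of $Q$-eigenvalues $\{\nu_i(G_1)\}$, and the pair $\big(\phi(Q(G_2);\cdot),\,\Gamma_{Q(G_2)}(\cdot)\big)$. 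The entire argument consists of verifying that, in each part, all of these data coincide for the two coronae (the standing hypotheses $n_1\ge 2$, $n_2\ge 1$, $r_1\ge 1$ of Theorem \ref{CoronaTh2} being assumed throughout).

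For part (a) the outer graph is $G_1$ or $G_2$ and the inner graph is the common $H$. First I would observe that two $Q$-cospectral \emph{regular} graphs must share the same valency: if $G_i$ is $r_i$-regular then $Q(G_i)=r_iI+A(G_i)$, so its largest $Q$-eigenvalue equals $2r_i$; equality of the $Q$-spectra therefore forces $r_1=r_2=:r$, and clearly $|V(G_1)|=|V(G_2)|=:n_1$. With $r$ and $n_1$ now common, the inner factor $\big(\phi(Q(H);x-r)\big)^{n_1}$ is literally identical for the two coronae, so it remains to compare the products $\prod_{i}\big(x-n_2r-\nu_i(G_\bullet)-\Gamma_{Q(H)}(x-r)(\nu_i(G_\bullet)-r)^2\big)$. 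Since $G_1$ and $G_2$ are $Q$-cospectral, the multisets $\{\nu_i(G_1)\}$ and $\{\nu_i(G_2)\}$ agree, so these products are equal factor-for-factor, whence $\phi(Q(G_1\star H))=\phi(Q(G_2\star H))$.

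For part (b) the outer graph is the common regular $G$ and the inner graphs are $H_1,H_2$. Here the valency $r$ and the valency-dependent data $n_1$ and $\{\nu_i(G)\}$ are automatically shared. Because $H_1$ and $H_2$ are $Q$-cospectral they have the same order $n_2$ and satisfy $\phi(Q(H_1);x-r)=\phi(Q(H_2);x-r)$, so the inner factors match; the hypothesis $\Gamma_{Q(H_1)}(x)=\Gamma_{Q(H_2)}(x)$ gives $\Gamma_{Q(H_1)}(x-r)=\Gamma_{Q(H_2)}(x-r)$, so every factor of the product agrees too. Substituting into Theorem \ref{CoronaTh2} yields $\phi(Q(G\star H_1))=\phi(Q(G\star H_2))$.

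There is essentially one genuine point to get right, and it lives in part (a): the observation that $Q$-cospectrality of regular graphs forces a common valency, which is exactly what makes the parameter $r_1$ of Theorem \ref{CoronaTh2} match across the two coronae. Without regularity this control is lost—the formula's dependence on $r_1$ is not determined by the $Q$-spectrum of the outer graph alone, and the theorem would not even apply. In part (b) the coronal hypothesis $\Gamma_{Q(H_1)}=\Gamma_{Q(H_2)}$ is precisely the analogue that compensates for the fact that $Q$-cospectral graphs need not have equal $Q$-coronals, so once it is assumed the comparison is immediate. Everything else is a direct substitution into Theorem \ref{CoronaTh2} and a factor-by-factor matching.
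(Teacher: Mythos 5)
Your proposal is correct and follows exactly the route the paper intends: the paper states Corollary \ref{cor:qcosp} as an immediate consequence of Theorem \ref{CoronaTh2} with no written proof, and your factor-by-factor matching of the data $(n_1, n_2, r, \{\nu_i\}, \phi(Q(\cdot)), \Gamma_{Q(\cdot)})$ is precisely that intended derivation. Your explicit justification that $Q$-cospectral regular graphs must share the same order and valency (via the largest $Q$-eigenvalue $2r$) fills in the one detail the paper leaves implicit, and it is correct.
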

Similar to Corollary \ref{cor:acosp}, the condition $\Gamma_{Q(H_1)}(x)=\Gamma_{Q(H_2)}(x)$ in (b) is not redundant since $Q$-cospectral graphs may have different $Q$-coronals.

\subsection{$L$-spectra of neighbourhood coronae}

The Laplacian spectrum of $G_1 \star G_2$ for a regular graph $G_1$ and an arbitrary graph $G_2$ was given in \cite[Theorem 3.1]{kn:Gopalapillai11}. However, the Laplacian matrix used by the author is incorrect, and this results in incorrect computation throughout the proof, although the final result is correct. In this subsection we fix this problem by giving a different computation of the Laplacian spectrum of $G_1 \star G_2$ using a different approach.

\begin{thm}\label{CoronaTh3}
Let $G_1$ be an $r_1$-regular graph on $n_1$ vertices and $G_2$ an arbitrary graph on $n_2$ vertices, where $n_1 \ge 2, n_2 \ge 1$ and $r_1 \ge 1$. Then
\begin{eqnarray*}
\phi\left(L(G_1\star G_2);x\right)=\big(\phi\left(L(G_2);x-r_1\right)\big)^{n_1}\cdot\prod_{i=1}^{n_1}\left(x-n_2r_1-\mu_i(G_1)-\Gamma_{L(G_2)}(x-r_1)(r_1-\mu_i(G_1))^2\right).
\end{eqnarray*}
\end{thm}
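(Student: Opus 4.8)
The plan is to follow the same Schur-complement strategy used for the $Q$-spectrum in the proof of Theorem \ref{CoronaTh2}, adapting it to $L=D-A$. First I would record the block form of the Laplacian of $G_1\star G_2$ with respect to the partition $V(G_1)\cup W_1\cup\cdots\cup W_{n_2}$. Using the diagonal matrix $D(G_1\star G_2)$ (whose entries are dictated by the degree formulas (\ref{eq:d}) and (\ref{eq:d1})) together with the block form of $A(G_1\star G_2)$ already displayed in the proof of Theorem \ref{CoronaTh1}, and invoking $r_1$-regularity so that $D(G_1)=r_1 I_{n_1}$, the two diagonal blocks collapse nicely: the top-left block $(n_2+1)D(G_1)-A(G_1)$ becomes $n_2 r_1 I_{n_1}+L(G_1)$ (since $A(G_1)=r_1 I_{n_1}-L(G_1)$), and the bottom-right block $D(G_2)\otimes I_{n_1}+I_{n_2}\otimes D(G_1)-A(G_2)\otimes I_{n_1}$ becomes $L(G_2)\otimes I_{n_1}+r_1 I_{n_1 n_2}$. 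The off-diagonal block is $-\mathbf{1}_{n_2}^T\otimes A(G_1)$ together with its transpose.

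Next, I would form $\phi(L(G_1\star G_2))=\det(xI-L(G_1\star G_2))$, so that the bottom-right block becomes $\big((x-r_1)I_{n_2}-L(G_2)\big)\otimes I_{n_1}$, and take the Schur complement of this block. Its determinant is $\big(\phi(L(G_2);x-r_1)\big)^{n_1}$ by the determinant rule for Kronecker products, exactly as in Theorem \ref{CoronaTh2}. For the Schur complement $S$, the Kronecker mixed-product rule collapses the correction term to $\Gamma_{L(G_2)}(x-r_1)\,A(G_1)^2$, giving $S=(x-n_2 r_1)I_{n_1}-L(G_1)-\Gamma_{L(G_2)}(x-r_1)A(G_1)^2$. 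Finally, because $G_1$ is $r_1$-regular, $A(G_1)=r_1 I_{n_1}-L(G_1)$ commutes with $L(G_1)$ and shares its eigenvectors, so an eigenvalue $\mu_i(G_1)$ of $L(G_1)$ contributes $(r_1-\mu_i(G_1))^2$ to $A(G_1)^2$; evaluating $\det(S)$ through the common eigenbasis yields the claimed product $\prod_{i=1}^{n_1}\big(x-n_2 r_1-\mu_i(G_1)-\Gamma_{L(G_2)}(x-r_1)(r_1-\mu_i(G_1))^2\big)$.

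The computation is essentially routine once the matrices are correct, so I expect the only real pitfall to be the very first step: writing down $D(G_1\star G_2)$, and hence the top-left block $n_2 r_1 I_{n_1}+L(G_1)$, correctly. This is precisely where \cite[Theorem 3.1]{kn:Gopalapillai11} goes astray, so I would be careful to derive the diagonal block from (\ref{eq:d}) rather than guess it, and to track the sign produced by $L=D-A$ (as opposed to $Q=D+A$) when passing to $A(G_1)^2$ --- although, as in the $Q$-case, this sign is immaterial because only the square $A(G_1)^2$ enters the coronal correction. No further obstacle is anticipated.
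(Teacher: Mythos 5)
Your proposal is correct and follows essentially the same route as the paper's own proof: the identical block decomposition of $L(G_1\star G_2)$ (with the top-left block $n_2 r_1 I_{n_1}+L(G_1)$ obtained from the degree formulas and $r_1$-regularity), the Schur complement of $\big((x-r_1)I_{n_2}-L(G_2)\big)\otimes I_{n_1}$, the Kronecker mixed-product collapse to $\Gamma_{L(G_2)}(x-r_1)A(G_1)^2$, and the shared eigenbasis of $A(G_1)$ and $L(G_1)$ to evaluate $\det(S)$. Your emphasis on deriving $D(G_1\star G_2)$ carefully is exactly the point where the earlier published proof went wrong, so no gap remains.
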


\begin{proof}
Using the notation at the beginning of this section, we have
\[
L(G_1\star G_2) =
\bmat{n_2D(G_1)+L(G_1) & -\mathbf{1}_{n_2}^T\otimes A(G_1) \\[0.2cm]
\left(-\mathbf{1}_{n_2}^T\otimes A(G_1)\right)^T & L(G_2)\otimes I_{n_1}+I_{n_2}\otimes D(G_1)}.
\]
Hence
\begin{eqnarray*}
\phi\left(L(G_1\star G_2)\right)
&=& \det\bmat{(x-n_2r_1)I_{n_1}-L(G_1) & \mathbf{1}_{n_2}^T\otimes A(G_1) \\[0.2cm]
                          -\left(-\mathbf{1}_{n_2}^T\otimes A(G_1)\right)^T & xI_{n_1n_2}-L(G_2)\otimes I_{n_1}-r_1I_{n_2}\otimes I_{n_1}}\\ [0.2cm]
&=& \det\bmat{
                          (x-n_2r_1)I_{n_1}-L(G_1) & \mathbf{1}_{n_2}^T\otimes A(G_1) \\[0.2cm]
                          \mathbf{1}_{n_2}\otimes A(G_1) & \big((x-r_1)I_{n_2}-L(G_2)\big)\otimes I_{n_1}
                        }\\ [0.2cm]
&=& \det\left(\big((x-r_1)I_{n_2}-L(G_2)\big)\otimes I_{n_1}\right)\cdot\det (S),
\end{eqnarray*}
where
$$
S=(x-n_2r_1)I_{n_1}-L(G_1)-(\mathbf{1}_{n_2}^T\otimes A(G_1))\left(\big((x-r_1)I_{n_2}-L(G_2)\big)\otimes I_{n_1}\right)^{-1}\left(\mathbf{1}_{n_2}\otimes A(G_1)\right)
$$
is the Schur complement \cite{kn:Schur} of $\big((x-r_1)I_{n_2}-L(G_2)\big)\otimes I_{n_1}$. Note that $\lambda$ is an eigenvalue of $A(G_1)$ with an eigenvector $\mathrm{v}$
if and only if $r_1-\lambda$ is an eigenvalue of $L(G_1)$ with the same eigenvector $\mathrm{v}$. Hence
\begin{eqnarray*}
\det\left(\big((x-r_1)I_{n_2}-L(G_2)\big)\otimes I_{n_1}\right)&=&\left(\det\big((x-r_1)I_{n_2}-L(G_2)\big)\right)^{n_1}\left(\det I_{n_1}\right)^{n_2}\\
                                                            &=&\big(\phi\left(L(G_2);x-r_1\right)\big)^{n_1}
\end{eqnarray*}
and
\begin{eqnarray*}
  \det (S) &=&  \det\left((x-n_2r_1)I_{n_1}-L(G_1)-\left(\mathbf{1}_{n_2}^T\big((x-r_1)I_{n_2}-L(G_2)\big)^{-1}\mathbf{1}_{n_2}\right)A(G_1)^2\right)\\
    &=&  \det\left((x-n_2r_1)I_{n_1}-L(G_1)-\Gamma_{L(G_2)}(x-r_1)A(G_1)^2\right)\\
    &=&\prod_{i=1}^{n_1}\left(x-n_2r_1-\mu_i(G_1)-\Gamma_{L(G_2)}(x-r_1)(r_1-\mu_i(G_1))^2\right).
\end{eqnarray*}
The result follows directly.
\qed
\end{proof}

It is known \cite[Proposition 2]{kn:Cui12} that, if $M$ is an $n \times n$ matrix with each row sum equal to a constant $t$, then $\Gamma_{M}(x) = n/(x-t)$. In particular, since for any graph $G_2$ with $n_2$ vertices, each row sum of $L(G_2)$ is equal to $0$, we have $\Gamma_{L(G_2)}(x) = n_2/x$. From this and Theorem \ref{CoronaTh3} we obtain the following result.

\begin{cor}\emph{\cite[Theorem 3.1]{kn:Gopalapillai11}}
\label{LCorRegular}
Let $G_1$ be an $r_1$-regular graph on $n_1$ vertices and $G_2$ an arbitrary graph on $n_2$ vertices, where $n_1 \ge 2, n_2 \ge 1$ and $r_1 \ge 1$. Then the $L$-spectrum of $G_1\star G_2$ consists of:
\begin{itemize}
  \item[\rm (a)] $r_1+\mu_i(G_2)$, repeated $n_1$ times, for each $i=2,\ldots,n_2$;
  \item[\rm (b)] two eigenvalues
  \[\frac{1}{2}\left\{(n_2+1)r_1+\mu_j(G_1)\pm\sqrt{\big((n_2+1)r_1+\mu_j(G_1)\big)^2-4\mu_j(G_1)\big((2n_2+1)r_1-n_2\mu_j(G_1)\big)}\right\},\]
for each $j=1,2,\ldots,n_1$.
\end{itemize}
\end{cor}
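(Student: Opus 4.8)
The plan is to feed the explicit Laplacian coronal into the factorisation supplied by Theorem~\ref{CoronaTh3}. Since every row sum of $L(G_2)$ is zero, the text immediately preceding the statement records $\Gamma_{L(G_2)}(x) = n_2/x$, hence $\Gamma_{L(G_2)}(x-r_1) = n_2/(x-r_1)$. Substituting this into Theorem~\ref{CoronaTh3} turns the characteristic polynomial into
$$\big(\phi(L(G_2); x - r_1)\big)^{n_1} \cdot \prod_{i=1}^{n_1}\left(x - n_2 r_1 - \mu_i(G_1) - \frac{n_2}{x - r_1}\big(r_1 - \mu_i(G_1)\big)^2\right).$$
The first factor vanishes exactly when $x - r_1 = \mu_i(G_2)$, so it should account for the eigenvalues of type (a); the second factor, once its denominators are cleared, should yield the quadratics of type (b). The whole exercise is then bookkeeping, apart from one genuine point addressed below.

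First I would isolate the pole of the second factor. Since $\mu_1(G_2) = 0$, I can peel off one root and write $\phi(L(G_2); x - r_1) = (x - r_1)\prod_{k=2}^{n_2}(x - r_1 - \mu_k(G_2))$. Multiplying each term of the product by $(x-r_1)$ and compensating with a global $(x-r_1)^{-n_1}$ rewrites the second factor as
$$\frac{1}{(x - r_1)^{n_1}} \prod_{i=1}^{n_1}\Big[(x - r_1)\big(x - n_2 r_1 - \mu_i(G_1)\big) - n_2\big(r_1 - \mu_i(G_1)\big)^2\Big].$$
The decisive observation is that the factor $(x-r_1)^{n_1}$ coming from the $\mu_1(G_2)=0$ roots of $\big(\phi(L(G_2); x-r_1)\big)^{n_1}$ cancels exactly against this denominator, leaving the honest polynomial
$$\left(\prod_{k=2}^{n_2}(x - r_1 - \mu_k(G_2))\right)^{n_1} \cdot \prod_{j=1}^{n_1}\Big[(x - r_1)\big(x - n_2 r_1 - \mu_j(G_1)\big) - n_2\big(r_1 - \mu_j(G_1)\big)^2\Big].$$

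From here the two families read off directly. The first product contributes $x = r_1 + \mu_k(G_2)$ for $k = 2,\ldots,n_2$, each with multiplicity $n_1$, which is precisely list (a); the range begins at $k=2$ exactly because the $k=1$ factor was consumed in the cancellation. For list (b) I would expand each bracketed quadratic to $x^2 - \big((n_2+1)r_1 + \mu_j(G_1)\big)x + \mu_j(G_1)\big((2n_2+1)r_1 - n_2\mu_j(G_1)\big)$ and apply the quadratic formula, obtaining the two roots displayed in (b). The only step that is not purely mechanical is the pole--zero cancellation: one must confirm that $\mu_1(G_2)=0$ supplies the factor $(x-r_1)$ to exactly the power $n_1$ needed to annihilate the denominator, and, as a sanity check, that when $G_2$ is disconnected the surviving zero eigenvalues of $G_2$ reappear correctly as the $\mu_k(G_2)=0$ terms with $k\ge 2$ in list (a). The factorisation above makes this transparent, since pulling out a single $(x-r_1)$ per copy gives exactly $(x-r_1)^{n_1}$, and once it is verified the expansion and the quadratic formula complete the argument.
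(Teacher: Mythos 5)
Your proposal is correct and takes essentially the same route as the paper: the paper also obtains Corollary \ref{LCorRegular} by substituting $\Gamma_{L(G_2)}(x)=n_2/x$ (hence $\Gamma_{L(G_2)}(x-r_1)=n_2/(x-r_1)$) into Theorem \ref{CoronaTh3}, leaving the simplification implicit. Your explicit bookkeeping --- the cancellation of $(x-r_1)^{n_1}$ coming from the $\mu_1(G_2)=0$ roots against the cleared denominators, followed by the expansion of the quadratic and the quadratic formula --- is exactly the omitted verification, and it is carried out correctly.
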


\section{Application: constructing new expanders from known ones}
\label{sec:exp}

With the help of Corollary \ref{LCorRegular}, in this section we give methods for constructing new families of expander graphs from known ones.

In the literature, $a(G) = \mu_2(G)$ is called the \emph{algebraic connectivity} \cite{kn:Cvetkovic10} of $G$. An infinite family of graphs, $G_{1}, G_2, G_3, \ldots$, is called a family of \emph{$\ve$-expander graphs} \cite{HLW}, where $\ve > 0$ is a fixed constant, if (i) all these graphs are $k$-regular for a fixed integer $k \ge 3$; (ii) $a(G_i) \ge \ve$ for $i = 1, 2, 3, \ldots$; and (iii) $n_i = |V(G_i)| \rightarrow \infty$ as $i \rightarrow \infty$. (Here we use the algebraic connectivity to define expander families. This is equivalent to the usual definition by using the isoperimetric number $i(G)$, because $a(G)/2 \le i(G) \le \sqrt{a(G)(2\De - a(G))}$ \cite{Moh} (see also \cite[Theorem 7.5.15]{kn:Cvetkovic10}) for any graph $G \ne K_1, K_2, K_3$ with maximum degree $\De$.)

Given integers $n, k \ge 1$, let
\be
\label{eq:f}
f_{n,k}(x) =  x + (n+1)k - \sqrt{(4n+1)x^2 - 2(3n+1)k x + ((n +1)k)^2}.
\ee
This is a well-defined real-valued function over $(-\infty, \infty)$.
Define
\be
\label{eq:del}
\delta(n,k,\ve) = \frac{1}{2} \cdot \min\{f_{n,k}(\ve),\, f_{n,k}(2k)\}.
\ee
It can be verified that $\delta(n,k,\ve) > 0$ for $n, k \ge 1$ and $\ve > 0$.

\begin{lem}\label{lem:abound}
Let $G$ be a connected $k$-regular graph with $m$ vertices, and $H \ne K_n$ a connected $r$-regular graph with $n$ vertices. Then
\bea
a(G \star H) & =& \frac{1}{2} \cdot \min\{f_{n,k}(a(G)),\, f_{n,k}(\mu_m(G))\} \label{eq:af1}\\
                   &\ge& \frac{1}{2} \cdot \min\{f_{n,k}(a(G)),\, f_{n,k}(2k)\}. \label{eq:af}
\eea
\end{lem}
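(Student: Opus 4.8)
The plan is to read off the $L$-spectrum of $G \star H$ from Corollary \ref{LCorRegular} (applied with $G_1 = G$, $r_1 = k$, $n_1 = m$ and $G_2 = H$, $n_2 = n$) and then decide which eigenvalue is the second smallest. First I would record that the spectrum consists of the values $k + \mu_i(H)$ for $2 \le i \le n$ (each repeated $m$ times) together with, for each $j = 1, \ldots, m$, the two roots $\frac{1}{2}\{(n+1)k + \mu_j(G) \pm \sqrt{D_j}\}$ from part (b), where $D_j = ((n+1)k + \mu_j(G))^2 - 4\mu_j(G)((2n+1)k - n\mu_j(G))$. A direct expansion shows $D_j = (4n+1)\mu_j(G)^2 - 2(3n+1)k\mu_j(G) + ((n+1)k)^2$, which is exactly the quantity under the radical in the definition (\ref{eq:f}) of $f_{n,k}$. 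Hence the smaller root attached to index $j$ is precisely $\frac{1}{2} f_{n,k}(\mu_j(G))$.

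Next I would locate $a(G\star H) = \mu_2(G\star H)$. Since $G$ is connected, $\mu_1(G) = 0$, and $f_{n,k}(0) = 0$ produces the eigenvalue $0 = \mu_1(G\star H)$. To show the second smallest eigenvalue comes from the smaller roots, I would check that the other two families cannot beat them: the only stationary point of $f_{n,k}$ is $x = k$, where $f_{n,k}(k) = 2k$, so $\frac{1}{2} f_{n,k}(x) \le k$ for all $x$, whence every smaller root is at most $k$; meanwhile each value $k + \mu_i(H) \ge k + a(H) > k$ (as $H$ is connected) and each larger root is at least $\frac{1}{2}(n+1)k \ge k$. Thus $a(G\star H) = \min_{2 \le j \le m} \frac{1}{2} f_{n,k}(\mu_j(G))$, and each such value is positive because $f_{n,k} > 0$ on $(0,2k]$ while $\mu_j(G) \in (0,2k]$ for $j \ge 2$.

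The heart of the argument is to collapse this minimum over all nonzero Laplacian eigenvalues of $G$ to a minimum over the two endpoints $a(G) = \mu_2(G)$ and $\mu_m(G)$, which is (\ref{eq:af1}). For this I would prove that $f_{n,k}$ is \emph{concave}: writing $f_{n,k}(x) = x + (n+1)k - \sqrt{p(x)}$ with $p(x) = Ax^2 + Bx + C$, $A = 4n+1$, $B = -2(3n+1)k$, $C = ((n+1)k)^2$, a short computation gives $(\sqrt{p}\,)'' = (4AC - B^2)/(4 p^{3/2})$, and here $4AC - B^2 = 16 n^3 k^2 \ge 0$, so $\sqrt{p}$ is convex and $f_{n,k}$ is concave. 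Since $\mu_2(G), \ldots, \mu_m(G)$ all lie in $[a(G), \mu_m(G)]$, concavity forces $\min_{2 \le j \le m} f_{n,k}(\mu_j(G)) = \min\{f_{n,k}(a(G)), f_{n,k}(\mu_m(G))\}$.

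Finally, for the bound (\ref{eq:af}) I would combine $\mu_m(G) \le 2k$ with monotonicity. Solving $f_{n,k}'(x) = 0$ isolates the unique stationary point $x = k$, so by concavity $f_{n,k}$ is decreasing on $[k,\infty)$. The standard spectral bound $\mu_m(G) \ge \Delta + 1 = k + 1 > k$ places $\mu_m(G)$ in $(k, 2k]$, where $f_{n,k}$ is decreasing, giving $f_{n,k}(\mu_m(G)) \ge f_{n,k}(2k)$; substituting this into (\ref{eq:af1}) can only decrease the minimum, which yields (\ref{eq:af}). I expect the main obstacle to be the concavity step—securing the clean identity $4AC - B^2 = 16 n^3 k^2$—together with the supporting inequality $\mu_m(G) > k$, since the remaining work is bookkeeping with the spectrum supplied by Corollary \ref{LCorRegular}.
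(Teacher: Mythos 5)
Your proof is correct, and although it follows the paper's overall skeleton (read the $L$-spectrum of $G\star H$ from Corollary \ref{LCorRegular}, recognise the smaller root of each quadratic in family (b) as $\tfrac{1}{2}f_{n,k}(\mu_j(G))$, then minimise), the three analytic steps at its heart are genuinely different from the paper's. First, to rule out the eigenvalues $k+\mu_i(H)$ the paper verifies $f_{n,k}(a(G)) \le 2\bigl(k+a(H)\bigr)$ via an explicit inequality that needs $a(H) \le r < n-1$, i.e.\ the hypothesis $H \ne K_n$; you instead observe that every smaller root is at most $\tfrac{1}{2}f_{n,k}(k)=k$ while $k+\mu_i(H) > k$ for connected $H$ (and every larger root is at least $\tfrac{1}{2}(n+1)k \ge k$), which is cleaner and shows in passing that $H \ne K_n$ is not actually needed for the equality (\ref{eq:af1}). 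Second, the paper collapses $\min_{2\le j\le m} f_{n,k}(\mu_j(G))$ to the endpoints $a(G), \mu_m(G)$ by a first-derivative unimodality analysis ($f_{n,k}$ increases on $[0,k]$ and decreases on $[k,(2n+1)k/n)$); you obtain the same collapse from concavity, via the identity $2p''p-(p')^2 = 4AC-B^2 = 16n^3k^2 \ge 0$ for the quadratic $p$ under the radical --- a tidy computation that is implicit in the paper's completed-square rewriting of $f_{n,k}$, where the constant $4n^3k^2/(4n+1)^2$ appears. Third, for $\mu_m(G)\ge k$ the paper uses the trace of $L(G)$, while you invoke the standard bound $\mu_m(G)\ge \Delta+1=k+1$; both are legitimate. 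Two presentational caveats, neither a genuine gap: your second paragraph asserts that $x=k$ is the \emph{unique} stationary point and hence $f_{n,k}\le 2k$ everywhere, but this logically rests on the strict concavity you only establish a paragraph later (with concavity in hand it follows from the one-line check $f_{n,k}'(k) = 1 - \tfrac{2nk}{2\sqrt{n^2k^2}} = 0$, which you should record); and the positivity claim $f_{n,k}>0$ on $(0,2k]$ deserves its one-line justification ($f_{n,k}(x)\ge 2x$ on $[0,k]$ by concavity, and $f_{n,k}(2k)=(n+3)k-k\sqrt{n^2+6n+1}>0$). Finally, note that the paper reuses the monotonicity of $f_{n,k}$ on $[0,k]$ in the proof of Theorem \ref{thm:exp}; if your argument replaces the paper's, that monotonicity still follows from your concavity plus $f_{n,k}'(k)=0$, so nothing downstream breaks.
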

\begin{proof}
We have
\bean
f_{n,k}(x) & = & x + (n+1)k - \sqrt{\{(n+1)k+ x\}^2-4x\{(2n+1)k-nx\}} \\
& = & x + (n+1)k - \sqrt{(4n+1) \left\{\left(x-\frac{(3n+1)k}{4n+1}\right)^2 + \frac{4n^3 k^2}{(4n+1)^2}\right\}}
\eean
and
$$
f'_{n,k}(x) = 1 - \left\{(4n+1)x^2 - 2(3n+1)k x + ((n +1)k)^2\right\}^{-1/2} \{(4n+1)x - (3n+1)k\}.
$$
Thus, if $x \le x_0 = (3n+1)k/(4n+1)$, then $f'_{n,k}(x) > 0$ and so $f_{n,k}(x)$ strictly increases with $x$ from $0$ to $x_0$. We now assume $x \ge x_0$. Then $f'_{n,k}(x) > 0$ if and only if
$$
\{(4n+1)x - (3n+1)k\}^2 < (4n+1)x^2 - 2(3n+1)k x + ((n+1)k)^2;
$$
that is,
$$
(4n+1)x^2 - 2(3n+1)kx + (2n+1)k^2 < 0.
$$
Since $x \ge x_0$, one can verify that this inequality holds if and only if $x < k$. In addition, $f'_{n,k}(k) = 0$, and if $k < x < (2n+1)k/n$ then $f'_{n,k}(x) < 0$. Therefore, $f_{n,k}(x)$ strictly increases with $x$ in $[0, k]$ and strictly decreases in $[k, (2n+1)k/n)$.

Since $G$ is $k$-regular, each $\mu_j(G) \le 2k < (2n+1)k/n$.
By Corollary \ref{LCorRegular} and what we just proved it follows that
$$
a(G \star H) =  \min\left\{k + a(H),\, \frac{f_{n,k}(a(G))}{2},\, \frac{f_{n,k}(\mu_m(G))}{2}\right\}.
$$

It can be verified that $f_{n,k}(x) > 0$ when $x \in (0, (2n+1)k/n)$.
Since $G$ is connected and $k$-regular, we have $0 < a(G) \le \mu_m(G) \le 2k$ and hence $f_{n,k}(a(G)), f_{n,k}(\mu_m(G)) > 0$. Since $H \ne K_n$, by \cite[Theorem 7.4.4]{kn:Cvetkovic10} we have $a(H) \le r < n - 1$. It is straightforward to verify that $f_{n,k}(a(G)) \le 2(k + a(H))$ if and only if
$$
\left(na(G) - nk + \frac{a(H)}{2}\right)^2 \ge \left(na(H) - n^2 k + \frac{a(H)}{4}\right) a(H).
$$
Since $a(H) < n - 1$, the latter inequality always holds and therefore we have proved (\ref{eq:af1}).

Since $\sum_{i=1}^m \mu_i(G) = km$ (by considering the trace of $L(G)$), we have $\mu_m(G) \ge k$. Since $f_{n,k}(x)$ strictly decreases in $[k, (2n+1)k/n)$ and $\mu_m(G) \le 2k$, it follows that $f_{n,k}(\mu_m(G)) \ge f_{n,k}(2k) > 0$, and therefore we have (\ref{eq:af}).
\qed
\end{proof}

\begin{thm}
\label{thm:exp}
Suppose $G_1, G_2, G_3, \ldots$ is a family of (non-complete) $k$-regular $\ve$-expander graphs, where $k \ge 3$. Let $H \ne K_n$ be a fixed connected $r$-regular graph on $n$ vertices such that $nk$ is even. Let $G_i(H)$, $i \ge 1$, be any $(n+1)k$-regular graph obtained by adding $n(nk-r)n_i/2$ edges to $G_i \star H$, where $n_i = |V(G_i)|$.
Then $G_1(H), G_2(H), G_3(H), \ldots$ is a family of $\delta(n,k,\ve)$-expander graphs of degree $(n+1)k$.
\end{thm}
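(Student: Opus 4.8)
The plan is to verify directly the three defining properties of a family of $\delta(n,k,\ve)$-expander graphs as set out just before Lemma \ref{lem:abound}: that each $G_i(H)$ is $(n+1)k$-regular with $(n+1)k \ge 3$; that $a(G_i(H)) \ge \delta(n,k,\ve)$ for every $i$; and that $|V(G_i(H))| \to \infty$ as $i \to \infty$. The first and third of these are essentially bookkeeping, so the substance of the argument is the lower bound on the algebraic connectivity.

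For the regularity and the vertex count I would begin from the degree formulas (\ref{eq:d}) and (\ref{eq:d1}). Since $G_i$ is $k$-regular and $H$ is $r$-regular on $n$ vertices, in $G_i \star H$ the $n_i$ vertices coming from $G_i$ already have degree $(n+1)k$, while each of the $n n_i$ vertices lying in a copy of $H$ has degree $r+k$ and is therefore short of $(n+1)k$ by exactly $nk-r$. Summing this deficiency over all copy-vertices and halving gives $n n_i(nk-r)/2$, which matches the prescribed number of added edges; this number is an integer because $nk$ is even while $nr$ is even by the handshake lemma applied to $H$, so $n n_i(nk-r)$ is even. A short counting (room) argument, valid for all sufficiently large $n_i$ and hence along the infinite family, shows that the required deficiencies can be realised by a simple edge set edge-disjoint from the existing copy-internal edges, so at least one such $(n+1)k$-regular $G_i(H)$ exists; moreover $(n+1)k \ge 2\cdot 3 = 6 \ge 3$. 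Finally $|V(G_i(H))| = |V(G_i \star H)| = (n+1)n_i \to \infty$ since $n_i \to \infty$.

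The core step is the lower bound on $a(G_i(H))$, and here I would invoke monotonicity of algebraic connectivity under edge addition. Both $G_i \star H$ and $G_i(H)$ are connected (connectedness of $G_i$ together with $k \ge 1$ already makes $G_i \star H$ connected, since every copy-vertex is joined to a neighbour of the corresponding $G_i$-vertex, and adding edges preserves connectedness), so $a(\cdot) = \mu_2(\cdot)$ admits the variational description $a = \min\{\mathrm{v}^T L\, \mathrm{v} : \mathrm{v}\perp \mathbf{1},\ \|\mathrm{v}\|=1\}$, where $\mathbf{1}$ is the all-ones vector on the common vertex set. Since $G_i(H)$ is obtained from $G_i \star H$ by adding edges, $L(G_i(H)) = L(G_i \star H) + P$ with $P$ a sum of single-edge Laplacians, hence positive semidefinite; as $\mathrm{v}^T P\, \mathrm{v} \ge 0$ on the common constraint set, taking minima gives $a(G_i(H)) \ge a(G_i \star H)$. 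Inequality (\ref{eq:af}) of Lemma \ref{lem:abound} then yields $a(G_i \star H) \ge \tfrac{1}{2}\min\{f_{n,k}(a(G_i)),\, f_{n,k}(2k)\}$.

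It remains to replace $a(G_i)$ by $\ve$. Since $G_i$ is a non-complete connected $k$-regular graph, its algebraic connectivity is at most its vertex connectivity, which is at most $k$, so $a(G_i) \le k$ by \cite[Theorem 7.4.4]{kn:Cvetkovic10}; and $a(G_i) \ge \ve$ by hypothesis. As established inside the proof of Lemma \ref{lem:abound}, $f_{n,k}$ is strictly increasing on $[0,k]$, so $f_{n,k}(a(G_i)) \ge f_{n,k}(\ve)$ and therefore $a(G_i \star H) \ge \tfrac{1}{2}\min\{f_{n,k}(\ve),\, f_{n,k}(2k)\} = \delta(n,k,\ve)$ by (\ref{eq:del}). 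Combining this with the edge-addition bound gives $a(G_i(H)) \ge \delta(n,k,\ve)$ for all $i$, which completes the three checks. I expect the only genuinely delicate points to be the clean justification of edge-addition monotonicity (in particular ensuring both graphs are connected so that the $\mathbf{1}^\perp$ min-max formula for $\mu_2$ is valid) and the feasibility and parity of the regularising edge set, for which the hypothesis that $nk$ is even is exactly what guarantees an integer edge count; the analytic comparison through $f_{n,k}$ is then immediate from Lemma \ref{lem:abound}.
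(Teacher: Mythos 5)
Your proposal is correct and follows essentially the same route as the paper's proof: both rest on inequality (\ref{eq:af}) of Lemma \ref{lem:abound}, the bound $a(G_i) \le k$ for non-complete graphs (same citation), the strict monotonicity of $f_{n,k}$ on $[0,k]$ to pass from $a(G_i)$ to $\ve$, and the monotonicity of algebraic connectivity under edge addition to transfer the bound from $G_i \star H$ to $G_i(H)$. The only cosmetic differences are that you prove the edge-addition monotonicity inline via the variational characterization where the paper cites it as a known theorem, and you spell out the parity and degree-deficiency bookkeeping in slightly more detail.
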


In constructing $G_i \star H$, a new edge is allowed to join two non-adjacent vertices in the same copy of $H$. In general, there are many ways to construct $G_i(H)$. Hence different expander families can be constructed even for the same graph $H$ and the same input family $G_1, G_2, G_3, \ldots$. We will give a few specific constructions later.

\bigskip
\begin{proof}
We apply Lemma \ref{lem:abound} to the given family of expanders $G_1, G_2, G_3, \ldots$. Since $G_i \ne K_{n_i}$ for each $i$, where $n_i = |V(G_i)|$, we have $a(G_i) \le k$ by \cite[Theorem 7.4.4]{kn:Cvetkovic10}. Since $f_{n,k}(x)$ strictly increases in $[0, k]$ as we saw in the proof of Lemma \ref{lem:abound}, and since $a(G_i) \ge \ve$, we have $f_{n,k}(a(G_i)) \ge f_{n,k}(\ve)$. Therefore, by (\ref{eq:af}),
\be
\label{eq:ad}
a(G_i \star H) \ge \delta(n,k,\ve).
\ee

Note that the graphs $G_i \star H$ are not regular. In fact, its vertices have degrees $(n+1)k$ or $r+k$, the latter being the degrees of the vertices in copies of $H$. Now let us add $n(nk-r)n_i/2$ edges to $G_i \star H$ such that the resultant graph $G_i(H)$ is $(n+1)k$-regular (and thus no new edge joins a vertex of $G_i$). (We allow a new edge to join two non-adjacent vertices in the same copy of $H$.) This is possible because $n(nk-r)$ is even (as $nr = 2|E(H)|$ and $nk$ is even). Since $G_i \star H$ is a spanning subgraph of $G_i(H)$, and since adding edges to a graph increases or preserves the algebraic connectivity \cite[Theorem 7.1.5]{kn:Cvetkovic10}, we have $a(G_i(H)) \ge a(G_i \star H) \ge \delta(n,k,\ve)$. Therefore, by (\ref{eq:ad}), $G_1(H), G_2(H), G_3(H), \ldots$ is a family of $\delta(n,k,\ve)$-expanders.
\qed
\end{proof}

Theorem \ref{thm:exp} along with the following more explicit construction can be applied recursively to construct new families of expander graphs based on a known family.

\begin{con}
\label{con:kr}
{\em In the case when $k$ and $r$ are both even, one way to construct $G_i(H)$ is as follows. Write $r = sk + t$, so that $t$ is even, where $0 \le t \le k-1$. Choose $U$ and $W$ to be, respectively, $k$-regular and $(k-t)$-regular graphs on the same vertex set $\{u_1, \ldots, u_{n_i}\}$ of size $n_i$. (We allow $U$ and $W$ to have common edges.) In the definition of $G_i \star H$, let $H_j$ denote the copy of $H$ whose vertices are adjacent to the neighbours of the $j$th vertex of $G_i$, $j = 1, 2, \ldots, n_i$. Let us add edges to $G_i \star H$ in the following way. If $u_a$ and $u_b$ are adjacent in $U$, then we add $n(n-s-1)$ edges between $V(H_a)$ and $V(H_b)$ so that they form an $(n-s-1)$-regular bipartite graph $B_{ab}$. If $u_a$ and $u_b$ are adjacent in $W$, then we add a perfect matching $M_{ab}$ between $V(H_a)$ and $V(H_b)$. In the case when $u_a$ and $u_b$ are adjacent in both $U$ and $W$, we add $M_{ab}$ such that it has no common edge with $B_{ab}$. This is possible because the complementary bipartite graph of $B_{ab}$ is an $(s+1)$-regular and hence contains a perfect matching. It can be easily verified that the graph $G_i(H)$ obtained this way is $(n+1)k$-regular. By Theorem \ref{thm:exp}, $G_1(H), G_2(H), G_3(H), \ldots$ is a family of $\delta(n,k,\ve)$-expanders whose structure relies on the choice of $U$ and $W$.

It is noteworthy that we may choose $U$ to be isomorphic to $G_i$. This choice will be especially helpful when dealing with Cayley graphs in our subsequent discussion.}
\qed\end{con}

Given a group $X$ and a subset $S \subseteq X \setminus \{1\}$ such that $S^{-1} = S$, the \emph{Cayley graph} $\Cay(X, S)$ is defined to have vertex set $X$ such that $x, y \in X$ are adjacent if and only if $x^{-1}y \in S$.

\begin{exam}
\label{ex:pq}
{\em Let us apply Construction \ref{con:kr} to the well known Ramanujan graphs $X^{p,q}$ of Lubotzky, Phillips and Sarnak \cite{DSV}, where $p, q$ are distinct odd primes such that $q > 2 \sqrt{p}$.
If $p$ is a square modulo $q$, then $X^{p,q}$ is a non-bipartite $(p+1)$-regular Cayley graph over $\PSL(2, q)$ with $q(q^2-1)/2$ vertices; otherwise, $X^{p,q}$ is a bipartite $(p+1)$-regular Cayley graph over $\PGL(2, q)$ with $q(q^2-1)$ vertices. Here $\PSL(2, q)$ and $\PGL(2, q)$ are, respectively, the projective special linear group of dimension $2$ over the finite field $\mathbb{F}_q$ and the projective general linear group of dimension $2$ over $\mathbb{F}_q$ (see e.g.~\cite{DSV, Dixon-Mortimer}). It is known \cite[Theorem 4.4.4]{DSV} that, for any $\g$ with $0 < \g < 1/6$, $a(X^{p,q}) \ge \ve(\g) = (p+1) - p^{\frac{5}{6} + \g} - p^{\frac{1}{6}-\g}$. Thus, for a fixed odd prime $p$, $\{X^{p,q}: q > 2 \sqrt{p}\;\,\mbox{a prime}\}$ is a family of $(p+1)$-regular $\ve(\g)$-expanders.

Choose a connected $r$-regular graph $H \ne K_n$ with $n$ vertices, where $r \ge 2$ is even. Applying Construction \ref{con:kr} to $H$ and any appropriately chosen $U$ and $W$, we obtain a family of $(n+1)(p+1)$-regular $\delta(n,p+1,\ve(\g))$-expander graphs $\{X^{p,q}(H): q > 2 \sqrt{p}\;\,\mbox{a prime}\}$. Note that $X^{p,q}(H)$ has $(n+1)q(q^2-1)/2$ or $(n+1)q(q^2-1)$ vertices.}
\qed
\end{exam}

The following is another specification of Construction \ref{con:kr} for Cayley expanders.

\begin{con}
\label{con:cay}
{\em Suppose $G_1, G_2, G_3, \ldots, $ is a family of $\ve$-expanders such that each $G_i = \Cay(X_i, S_i)$ is a Cayley graph with degree $k = |S_i| \ge 3$ and order $n_i = |X_i|$. Suppose $H = \Cay(Y, T)$ is a Cayley graph with degree $r = |T| < k$ and order $n = |Y|$ such that $nk$ is even. Then $G_i \star H$ is isomorphic to the graph defined on $X_i \times (Y \cup \{\infty\})$ such that
\begin{itemize}
\item[(a)] $(x, \infty) \sim (x', z)$ if and only if $x^{-1}x' \in S_i$ and $z \in Y \cup \{\infty\}$, and

\item[(b)] for $y \in Y$, $(x, y) \sim (x', y')$ if and only if $x=x', y' \in Y$ and $y^{-1} y' \in T$,
\end{itemize}
where $\sim$ means that the two vertices involved are adjacent.
Choose $U = G_i$ and $W = \Cay(X_i, S_0)$ for some $S_0 \subset S_i$ with $|S_0| = k - r > 0$. Add new edges to $G_i \star H$ in the following way:
\begin{itemize}
\item[(c)] add an edge joining $(x, y)$ and $(x', y')$ if and only if $x^{-1}x' \in S_i$ and $y \ne y'$;

\item[(d)] add an edge joining $(x, y)$ and $(x', y)$ if and only if $x^{-1}x' \in S_0$.
\end{itemize}
Denote the graph obtained this way by $\hat{G}_i$. It can be verified that the adjacency relation of $\hat{G}_i$ is given by (a) and the following rule:
\begin{itemize}
\item[(e)] $(x, y) \sim (x', z)$ if and only if: $x = x'$ and $z \in Y$ such that $y^{-1}z \in T$; or $x^{-1}x' \in S_i$ and $y \ne z \in Y \cup \{\infty\}$; or $x^{-1}x' \in S_0$ and $y = z$.
\end{itemize}
By Theorem \ref{thm:exp}, $\hat{G}_1, \hat{G}_2, \hat{G}_3, \ldots$ is a family of $\delta(n,k,\ve)$-expanders of degree $(n+1)k$.

As a special case, we may choose $H = \Cay(\ZZZ_n, \{1, -1\})$ to be the cycle of length $n$ such that $nk$ is even. If not every element of $S_i$ is an involution, then we can choose $s_i \in S_i$ to be a non-involution and set $S_0 = S_i \setminus \{s_i, s_i^{-1}\}$. In this case, the relation (e) reads as follows: $(x, y) \sim (x, y \pm 1)$ (where the operation on the second coordinate is modulo $n$); $(x, y) \sim (x', z)$ if and only if $x^{-1}x' \in S_i$, and either $y \ne z \in \ZZZ_n \cup \{\infty\}$, or $y=z$ but $x^{-1}x' \ne s_i, s_i^{-1}$.}
\qed
\end{con}

Since many known families of expanders are Cayley graphs \cite{HLW, Lu}, Construction \ref{con:cay} can be used to prove existence and give constructions of many expander families of various degrees based on known Cayley expanders. The following example illustrates this methodology.

\begin{exam}
\label{ex:4}
{\em
Let $a = \bmat{1 & 1\\0 & 1}$ and $b = \bmat{1 & 0\\1 & 1}$, and let $S_m = \{a, a^{-1}, b, b^{-1}\} \subset \SL_2(\ZZZ_m)$.  It is known \cite[Corollary 2.13]{Lu} that the Cayley graphs $G_m = \Cay(\SL_2(\ZZZ_m), S_m)$, for integers $m \ge 2$, form a family of $4$-regular $\ve$-expanders for some $\ve > 0$. Choose $n = 3$ and $S_0 = S_m \setminus \{b, b^{-1}\}$. Define $\hat{G}_m$ to be the graph on $\SL_2(\ZZZ_m) \times (\ZZZ_3 \cup \{\infty\})$ with the following adjacency relations: $(x, j) \sim (x, j \pm 1)$ (for $j \in \ZZZ_3$); $(x, j) \sim (x', \ell)$ if and only if $x^{-1}x' \in S_m$, and either $j \ne \ell \in \ZZZ_3 \cup \{\infty\}$, or $j=\ell$ and $x^{-1}x' = b$ or $b^{-1}$. Then $\{G_m\}_{m \ge 2}$ is a family of $16$-regular $\delta(3, 4, \ve)$-expander graphs, where $\delta(3, 4, \ve) = \ve + 16 - \sqrt{13\ve^2 - 80\ve + 16^2}$ by (\ref{eq:f}) and (\ref{eq:del}).
}
\qed
\end{exam}

Similar to Theorem \ref{thm:exp}, we can construct new expanders from known ones by using edge coronae. Given vertex-disjoint graphs $G_1, G_2$ with $n_1, n_2$ vertices and $m_1, m_2$ edges, respectively, the \emph{edge corona} \cite{kn:Hou10} $G_1 \diamond G_2$ of $G_1$ and $G_2$ is the graph obtained by taking one copy of $G_1$ and $m_1$ copies of $G_2$, and joining two end-vertices of the $i$th edge of $G_1$ to every vertex in the $i$th copy of $G_2$. It is proved in \cite[Theorem 2.4]{kn:Hou10} that, if $G_1$ is $r_1$-regular (where $r_1 \ge 2$) and $G_2$ is $r_2$-regular, then the $L$-spectrum of $G_1\diamond G_2$ consists of: $2$, repeated $m_1-n_1$ times; $2+\mu_i(G_2)$, repeated $m_1$ times, for each $i=2,\ldots,n_2$; and $\frac{1}{2}\left\{r_1n_2+\mu_j(G_1)+2\pm\sqrt{\big(r_1n_2+\mu_j(G_1)+2\big)^2-4(n_2+2)\mu_j(G_1)}\right\}$ for each $j=1,2,\ldots,n_1$.

Define
\ben
g_{n,k}(x) =  x + nk + 2 - \sqrt{(x + nk + 2)^2 - 4(n+2)x}.
\een
Similar to Lemma \ref{lem:abound}, using the above result for $L$-spectra of edge coronae, one can prove that, if $G$ is $k$-regular with $m$ vertices and $H \ne K_n$ is $r$-regular with $n$ vertices, then $a(G \diamond  H) = g_{n,k}(a(G))/2$.
Based on this we obtain the following result whose proof is similar to that of Theorem \ref{thm:exp} and hence is omitted.

\begin{thm}
\label{thm:expE}
Suppose $G_1, G_2, G_3, \ldots$ is a family of (non-complete) $k$-regular $\ve$-expander graphs, where $k \ge 3$. Let $H \ne K_n$ be a fixed connected $r$-regular graph on $n$ vertices. Let $G_i(H)$, $i \ge 1$, be any $(n+1)k$-regular graph obtained by adding $n\big((n+1)k-r-2\big) m_i/2$ edges to $G_i \diamond H$, where $m_i = |E(G_i)|$. Then $G_1(H), G_2(H), G_3(H), \ldots$ is a family of $(g_{n,k}(\ve)/2)$-expander graphs of degree $(n+1)k$.
\end{thm}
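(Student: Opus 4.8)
The plan is to mirror the proof of Theorem~\ref{thm:exp} line by line, replacing the neighbourhood corona $\star$ by the edge corona $\diamond$, the function $f_{n,k}$ by $g_{n,k}$, and Lemma~\ref{lem:abound} by the identity $a(G\diamond H)=g_{n,k}(a(G))/2$ recorded just before the statement, which I take as given. The only genuinely new analytic input I must supply is that $g_{n,k}$ is strictly increasing on $[0,\infty)$; this is what transports the uniform lower bound $a(G_i)\ge\ve$ on the input family through $g_{n,k}$. Note this is cleaner than the $f_{n,k}$ case, where the function was only unimodal and an upper bound $a(G_i)\le k$ was needed; here no upper bound on $a(G_i)$ is required.

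To prove the monotonicity I would write $g_{n,k}(x)=x+c-\sqrt{D(x)}$ with $c=nk+2$ and $D(x)=(x+c)^2-4(n+2)x$, so that $g_{n,k}'(x)=1-D'(x)/(2\sqrt{D(x)})$. Then $g_{n,k}'(x)>0$ is equivalent to $2\sqrt{D(x)}>D'(x)$, which I verify by showing that $4D(x)-D'(x)^2$ is a positive constant. A direct expansion gives $4D(x)-D'(x)^2=4c^2-(2c-4(n+2))^2=16n(n+2)(k-1)$, which is positive since $n\ge1$ and $k\ge3$. Hence $D(x)>0$ for all $x$ and $2\sqrt{D(x)}>|D'(x)|\ge D'(x)$, so $g_{n,k}'(x)>0$ everywhere. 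Since $g_{n,k}(0)=c-|c|=0$, strict monotonicity gives $g_{n,k}(\ve)>0$, and from $a(G_i)\ge\ve$ I conclude $a(G_i\diamond H)=g_{n,k}(a(G_i))/2\ge g_{n,k}(\ve)/2>0$.

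It then remains to pass from $G_i\diamond H$ to its regularization $G_i(H)$, exactly as in Theorem~\ref{thm:exp}. In $G_i\diamond H$ the vertices inherited from $G_i$ have degree $(n+1)k$, while the $m_in$ vertices lying in copies of $H$ have degree $r+2<(n+1)k$ (using $r\le n-2$ because $H\ne K_n$, and $(n+1)k\ge3(n+1)>n$). To make the graph $(n+1)k$-regular I add $(n+1)k-r-2$ to the degree of each deficient vertex, i.e.\ $n\big((n+1)k-r-2\big)m_i/2$ new edges placed among the copies of $H$; all added edges avoid the $G_i$-vertices, which are already at the target degree. This count is an integer with no parity hypothesis on $nk$: since $nr=2|E(H)|$ is even and $n(n+1)$ is a product of consecutive integers, $n\big((n+1)k-r-2\big)=n(n+1)k-nr-2n$ is even; and for all large $i$ there is enough room among the $m_in\to\infty$ vertices to realise such a regular augmentation as a simple graph.

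Finally, $G_i\diamond H$ is a spanning subgraph of $G_i(H)$, and adding edges does not decrease the algebraic connectivity \cite[Theorem 7.1.5]{kn:Cvetkovic10}, so $a(G_i(H))\ge a(G_i\diamond H)\ge g_{n,k}(\ve)/2$. Each $G_i(H)$ is $(n+1)k$-regular with $(n+1)k\ge3$, and its order $n_i(1+kn/2)$ tends to infinity; hence $G_1(H),G_2(H),\ldots$ is a family of $\big(g_{n,k}(\ve)/2\big)$-expanders, as claimed. The only real computation is the constant-discriminant identity $4D-(D')^2=16n(n+2)(k-1)$ driving the monotonicity of $g_{n,k}$; I expect this to be the sole step requiring care, everything else being bookkeeping transferred from Theorem~\ref{thm:exp}.
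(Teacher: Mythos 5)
Your proposal is correct and follows exactly the route the paper intends: the paper omits this proof, saying it is ``similar to that of Theorem \ref{thm:exp}'', and your argument is precisely that transfer, combining the stated identity $a(G\diamond H)=g_{n,k}(a(G))/2$ with the degree/parity bookkeeping for the regularizing edges and the fact that adding edges cannot decrease algebraic connectivity. The one detail you supply beyond the paper's sketch --- the global strict monotonicity of $g_{n,k}$ via the constant-discriminant identity $4D(x)-D'(x)^2=16n(n+2)(k-1)>0$ --- is computed correctly and is indeed the piece needed to push $a(G_i)\ge\ve$ through the identity; it also correctly explains why, unlike in Theorem \ref{thm:exp}, neither the hypothesis that $nk$ is even nor an upper bound on $a(G_i)$ is required here.
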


\section{Concluding remarks}
\label{sec:rem}

It is well known that the algebraic connectivity of a graph holds the key to understanding several important partition-related invariants such as the isoperimetric number, the bisection width, etc. (By definition, the \emph{bisection width}, denoted by $\bw(G)$, of an $n$-vertex graph $G$ equals $\min\{|\partial S|: S\in V(G),\,|S|=\lfloor n/2 \rfloor\}$, where $\partial S$ is the set of edges of $G$ between $S$ and $V(G) \setminus S$.) Lemma \ref{lem:abound} can be used to bound such invariants for neighbourhood coronae. For example, it is known \cite[Corollary 7.5.3]{kn:Cvetkovic10} that the bisection width $\bw(G)$ of an $n$-vertex graph $G$ is at least $na(G)/4$ if $n$ is even and $(n^2 - 1)a(G)/4n$ if $n$ is odd. From this and (\ref{eq:af}) it follows that, for $G$ and $H$ as in Lemma \ref{lem:abound},
$$
\bw(G\star H)\ge\left\{\begin{array}{ll}
                   \frac{m(n+1)}{8} \cdot \min\{f_{n,k}(a(G)),\, f_{n,k}(2k)\}, & \text{if $m(n+1)$ is even}, \\[0.2cm]
                    \frac{(m(n+1))^2-1}{8m(n+1)} \cdot \min\{f_{n,k}(a(G)),\, f_{n,k}(2k)\}, & \text{if $m(n+1)$ is odd}.
                \end{array}
\right.
$$

Finally, the \emph{vertex expansion} (see e.g. \cite{kn:Cvetkovic10}) of an $n$-vertex graph $G$ is defined as $i^V(G) = \min\{|\delta(S)|/|S|: S \subset V(G), 1 \le |S| \le n/2\}$, where $\delta(S)$ is the set of vertices outside $S$ that are adjacent to some vertices in $S$. It is known \cite[Theorem 7.6.1]{kn:Cvetkovic10} that $i^V(G) \ge 2a(G)/(\De + 2a(G))$ if $G$ has maximum degree $\De$. From this and (\ref{eq:af}) it follows that, for $G$ and $H$ as in Lemma \ref{lem:abound}, we have
$$
i^V(G\star H)\ge \frac{\min\{f_{n,k}(a(G)),\, f_{n,k}(2k)\}}{(n+1)k+\min\{f_{n,k}(a(G)),\, f_{n,k}(2k)\}}.
$$

\bigskip
\noindent \textbf{Acknowledgements}~~X. Liu is supported by MIFRS and MIRS of the University of Melbourne. S. Zhou is supported by a Future Fellowship (FT110100629) of the Australian Research Council.

\end{document}